\theoremstyle{plain} 
\newtheorem{theorem}{Theorem}[section]
\newtheorem{lemma}[theorem]{Lemma}
\newtheorem{proposition}[theorem]{Proposition}
 \theoremstyle{remark}
\newtheorem{remark}[theorem]{Remark} 
\numberwithin{equation}{section} \numberwithin{paragraph}{section}
\DeclareMathOperator{\Gal}{Gal}
\DeclareMathOperator{\ad}{Ad}
\DeclareMathOperator{\Frob}{Frob}
\newcommand{\cO}{{\mathcal O}}
\newcommand{\bbA}{{\mathbb A}}
\newcommand{\bbC}{{\mathbb C}}
\newcommand{\bbF}{{\mathbb F}}
\newcommand{\bbQ}{{\mathbb Q}}
\newcommand{\bbZ}{{\mathbb Z}}
\newcommand{\GL}{\mathrm{GL}}
\newcommand{\PGL}{\mathrm{PGL}}
\newcommand{\Proj}{\mathrm{Proj}}
\newcommand{\SL}{\mathrm{SL}}
\newcommand{\PSL}{\mathrm{PSL}}
\newcommand{\A}{\mathbb{A}}
\def\rhobar{ {\overline{\rho}} }
\newcommand{\ra}{\rightarrow}
\newcommand{\pdet}{\Delta}
\newcommand{\F}{{\mathbb F}}
\title{Modularity    of  $\PGL_2(\F_p)$-representations over totally real fields}
\author{Patrick B. Allen,\footnote{\textsc{Department of Mathematics and Statistics, McGill University, Montreal, Canada.} \textit{Email address}: \texttt{patrick.allen@mcgill.ca}}  \ \  Chandrashekhar B.  Khare\footnote{\textsc{Department of Mathematics, UCLA, Los Angeles, USA.} \textit{Email address}: \texttt{shekhar@math.ucla.edu}}  \ \ 
and  \ \ Jack A. Thorne\footnote{\textsc{Department of Pure Mathematics and Mathematical Statistics, Wilberforce Road, Cambridge, United Kingdom.} \textit{Email address:} \texttt{thorne@dpmms.cam.ac.uk}}} 
\begin{document}
\maketitle

\begin{abstract}

We study an analogue of Serre's modularity conjecture for projective representations $\rhobar: \Gal(\overline{K} / K) \ra \PGL_2(k)$, where $K$ is a totally real number field. We prove new cases of this conjecture when $k = \mathbb{F}_5$ by using the automorphy lifting theorems over CM fields established in \cite{AKT}.\footnote{\textit{2010 Mathematics Subject Classification:} 11F41, 11F80.}

\end{abstract}

\tableofcontents

\section{Introduction}

Let $K$ be a number field, and consider a continuous representation
\[ \rho : G_K \to \GL_2(k), \]
where $k$ is a finite field. (Here $G_K$ denotes the absolute Galois group of $K$; for this and other notation, see \S \ref{subsec_notation} below.) We say that $\rho$ is of  {\it Serre-type}, or $S$-type, if it is absolutely irreducible and totally odd, in the sense that for each real place $v$ of $K$ and each associated complex conjugation $c_v \in G_K$, $\det \rho(c_v) = -1$. 

Serre's conjecture and its generalisations assert that any $\rho$  of $S$-type should be automorphic (see for example \cite{Asterisque, Duke} in the case $K = \mathbb{Q}$, \cite{BDJ} when $K$ is totally real, and \cite{Sen18} for a general number field $ K$). The meaning of the word `automorphic' depends on the context but when $K$ is totally real, for example, we can ask for $\rho$ to be associated to a cuspidal automorphic representation $\pi$ of $\GL_2(\A_K)$ which is regular algebraic of weight 0 (see \S \ref{subsec_automorphy} below). Serre's conjecture is now a theorem when $K = \mathbb{Q}$ \cite{KW, Kha09}. For a totally real field $K$, some results are available when $k$ is `small'. These are summarised in the following theorem, which relies upon the papers \cite{Duke, Tunnell, SBT, Man, Ellenberg}:
\begin{theorem}\label{introthm_known_cases}
Let $K$ be a totally real number field, and let $\rho : G_K \to \GL_2(k)$ be a representation of $S$-type. Then $\rho$ is automorphic provided $| k | \in \{ 2, 3, 4, 5, 7, 9\}$.
\end{theorem}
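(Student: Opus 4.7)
The plan is to dispatch each cardinality separately, in all cases exploiting the smallness of $\PGL_2(k)$.

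For $|k| \in \{2,3\}$ the image of $\rho$ is solvable: $\GL_2(\F_2) = S_3$ embeds tautologically in $\GL_2(\Z)$, and $\GL_2(\F_3)$ (a central extension of $S_4$) embeds in $\GL_2(\Z[\zeta_8])$ via its identification with a subgroup of the binary octahedral group. Thus $\rho$ lifts to a continuous, odd, two-dimensional complex Artin representation $\tilde\rho : G_K \to \GL_2(\aQ)$ with solvable image. The Langlands--Tunnell theorem applies to $\tilde\rho$, producing a cuspidal automorphic representation (the oddness rules out the Eisenstein alternative), and reduction modulo a prime of $\aQ$ above the residue characteristic of $k$ recovers automorphy of $\rho$; this is the argument of \cite{Duke, Tunnell}.

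For $|k| \in \{4,5\}$ I would follow the method of Shepherd-Barron--Taylor \cite{SBT}. Set $\ell = \mathrm{char}\,k$. The moduli space $\mathcal M_\rho$ parametrizing pairs $(E,\iota)$ with $E/K$ an elliptic curve and $\iota : E[\ell] \xrightarrow{\sim} \rho$ a symplectic isomorphism is a twist of $Y(\ell)$, hence has genus zero and a smooth model whose $K$-rational points are dense in its real points. A Hilbert-irreducibility/Moret-Bailly argument then yields $(E,\iota) \in \mathcal M_\rho(K)$ such that the auxiliary residual representation $\bar\rho_{E,\ell'}$ (with $\ell' = 3$ when $\ell \in \{2,5\}$) has image containing $\SL_2(\F_{\ell'})$ and good local behaviour. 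By the already-established case $|k|=3$, $\bar\rho_{E,\ell'}$ is automorphic; a residually-$\ell'$ modularity lifting theorem for $\GL_2$ over the totally real field $K$ then shows that $E$ is modular, whence $\rho \cong \bar\rho_{E,\ell}$ is automorphic.

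For $|k|=7$, Manoharmayum \cite{Man} implements the same strategy with $\ell=7$; here the twist of $Y(7)$ has genus three (it is a twist of the Klein quartic), and the extra geometric input needed is a density result for its $K$-points that exploits this special structure, after which one again applies the $|k|=3$ case plus a modularity lifting theorem. For $|k|=9$, Ellenberg \cite{Ellenberg} replaces elliptic curves by abelian surfaces with real multiplication by an order in a real quadratic field in which $3$ is inert; $\rho$ is realized on the residue-field-$\F_9$ Tate module of such an abelian surface, and the same paradigm (find a $K$-point on a Hilbert modular surface with prescribed residual data, apply the $|k|=3$ case, then lift) yields automorphy. The main obstacle throughout the non-solvable cases is precisely this diophantine step: producing a $K$-rational point on the relevant moduli space with both the prescribed $\ell$-torsion structure and sufficiently generic auxiliary $\ell'$-torsion together with controllable local properties. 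Once such a point is constructed, the remaining input is the interplay of Langlands--Tunnell, cyclic base change for $\GL_2$ over totally real fields, and the modularity lifting theorems of Wiles--Taylor type in the totally real setting.
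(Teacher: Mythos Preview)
Your outline captures the architecture of the proof, but there are two substantive errors.

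\textbf{The case $|k|=4$ cannot be handled with elliptic curves.} If $E$ is an elliptic curve then $E[2]$ is an $\bbF_2$-vector space of dimension $2$, so its Galois action factors through $\GL_2(\bbF_2)$; it cannot realize a genuine $\GL_2(\bbF_4)$-representation. The correct object (and what \cite[Theorem~3.4]{SBT} actually produces) is an abelian surface $A$ with real multiplication by $\cO_{\bbQ(\sqrt 5)}$: since $2$ is inert in $\bbQ(\sqrt 5)$, the group scheme $A[2]$ is an $\bbF_4$-module of rank $2$, and one arranges $A[2]\cong\rhobar$. The auxiliary residual representation is then $A[\sqrt 5]$, which is an $\bbF_5$-representation, so the case one feeds back into is $|k|=5$, not $|k|=3$. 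The same remark applies to $|k|=9$: Ellenberg's abelian surfaces have RM by $\cO_{\bbQ(\sqrt 5)}$ (where $3$ is inert, as you note), and the auxiliary prime is again $\sqrt 5$, so one invokes the $|k|=5$ case rather than the $|k|=3$ case.

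\textbf{The determinant must be adjusted before the moduli argument.} For $|k|\in\{5,7,9\}$ you write ``$\iota: E[\ell]\xrightarrow{\sim}\rho$ a symplectic isomorphism'', but this forces $\det\rhobar=\overline\epsilon^{-1}$, which is not assumed. The paper first replaces $K$ by the totally real cyclic extension cut out by $(\det\rhobar)\overline\epsilon$ (and in the $\bbF_9$ case passes further to a solvable totally real extension), runs the moduli/lifting argument there, and then recovers automorphy over $K$ by solvable descent (Proposition~\ref{realSolvableDescent}). Without this step the moduli spaces you describe need not have $K$-points at all.
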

One can equally consider continuous representations
\[ \sigma : G_K \to \PGL_2(k), \]
where again $k$ is a finite field. We say that $\sigma$ is of $S$-type if it is absolutely irreducible and totally odd, in the sense that if $k$ has odd characteristic then for each real place $v$ of $K$, $\sigma(c_v)$ is non-trivial. One could formulate a projective analogue of Serre's conjecture, asking that any representation $\sigma$ of $S$-type be automorphic. A theorem of Tate implies that $\sigma$  lifts to a linear  representation valued in $\GL_2(k')$ for some finite extension $k' / k$,  and by $\sigma$ being automorphic we mean that a lift of  it to a linear representation is automorphic (see \S \ref{subsec_automorphy} below).  Thus if $k$ is allowed to vary, this conjecture is equivalent to Serre's conjecture, since any representation $\rho$ has an associated projective representation $\Proj(\rho)$, and any projective representation $\sigma$  lifts to a representation valued in $\GL_2(k')$ for some finite extension $k' / k$; moreover, $\rho$ is of $S$-type if and only if $\Proj(\rho)$ is, and $\rho$ is automorphic if and only if $\Proj(\rho)$ is.

However, for fixed $k$ the two conjectures are not equivalent: certainly if $\rho$ is valued in $\GL_2(k)$ then $\Proj(\rho)$ takes values in $\PGL_2(k)$, but it is not true that any representation $\sigma : G_K \to \PGL_2(k)$ admits a lift valued in $\GL_2(k)$, and in fact in general the determination of the minimal extension $k' / k$ such that there is a lift to $\GL_2(k')$ is somewhat subtle. It is therefore of interest to ask whether the consideration of projective representations allows one to expand the list of `known' cases of Serre's conjecture.

Our main theorem affirms that this is indeed the case. Before giving the statement we need to introduce one more piece of notation. We write $\pdet : \PGL_2(k) \to k^\times / (k^\times)^2$ for the homomorphism induced by the determinant. We say that a homomorphism $G_K \to k^\times / (k^\times)^2$ is totally even (resp. totally odd) if each complex conjugation in $G_K$ is trivial (resp. non-trivial) image. 
\begin{theorem}\label{introthm_projective_cases}
Let $K$ be a totally real number field, and let $\sigma : G_K \to \PGL_2(k)$ be a representation of $S$-type. Then $\sigma$ is automorphic provided that one of the following conditions is satisfied: 
\begin{enumerate}
\item $|k| \in \{ 2, 3, 4 \}$.
\item $|k| = 5$, $[K(\zeta_5) : K] = 4$, and $\pdet \circ \sigma$ is totally even.
\item $|k| = 5$, $[K(\zeta_5) : K] = 4$, and $\pdet \circ \sigma$ is totally odd.
\item $|k| = 7$ and $\pdet \circ \sigma$ is totally odd.
\item $|k| = 9$ and $\pdet \circ \sigma$ is totally even. 
\end{enumerate}
\end{theorem}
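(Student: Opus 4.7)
The plan is to lift each projective representation $\sigma$ to a linear representation via Tate's theorem, and then either appeal to Theorem \ref{introthm_known_cases} after a suitable twist, or prove automorphy over a CM extension of $K$ by means of the lifting theorems in \cite{AKT} and descend. Thus Tate's theorem produces a continuous lift $\rho : G_K \to \GL_2(k')$ for some finite extension $k'/k$, controlled by an obstruction class in $H^2(G_K, \overline{k}^\times)$; the case analysis then depends on how much flexibility we have in choosing $\rho$ with $k' = k$ and with prescribed behaviour of $\det \rho$ at infinity.

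For the cases $|k| \in \{2,3,4\}$ in (1), the groups $\PGL_2(k)$ are small: $\PGL_2(\F_2) = \GL_2(\F_2) = S_3$, so $\sigma$ itself is already a linear representation of $S$-type and Theorem \ref{introthm_known_cases} applies; for $|k| = 3$ and $|k| = 4$, using the exceptional isomorphism $A_5 \cong \PSL_2(\F_4) \cong \PSL_2(\F_5)$ and analysis of the cohomological obstruction in $H^2(G_K, \mu_{|k|-1})$, I would produce a $\GL_2(k)$-valued lift (possibly after a solvable totally real base change preserving the $S$-type condition) and again invoke Theorem \ref{introthm_known_cases}.

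For the totally odd cases (3) with $|k|=5$ and (4) with $|k|=7$, I would twist a given lift $\rho$ by a finite-order Galois character chosen so that $\det \rho(c_v) = -1$ at every real place $v$, producing a linear representation $\rho'$ of $S$-type. In case (3), the hypothesis $[K(\zeta_5):K]=4$ is used to ensure that the mod-$5$ cyclotomic character is surjective onto $\F_5^\times$, providing enough characters both to descend $\rho'$ to $\GL_2(\F_5)$ and to impose the required parity at each real place independently; then Theorem \ref{introthm_known_cases} applies to $\rho'$, hence to $\sigma$.

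For the totally even cases (2) with $|k|=5$ and (5) with $|k|=9$, no twist over $K$ can turn a lift of $\sigma$ into a totally odd linear representation --- this is exactly what $\pdet \circ \sigma$ being totally even measures. The plan is to pass to the CM quadratic extension $F/K$ --- in case (2), $F = K(\zeta_5)$ with totally real subfield $F^+ = K(\zeta_5 + \zeta_5^{-1})$ --- on which $\rho|_{G_F}$ fits the conjugate self-dual framework of \cite{AKT}. I would establish residual automorphy of $\rho|_{G_F}$ by exhibiting an auxiliary $S$-type linear representation over $F^+$ (or a suitable totally real extension) to which Theorem \ref{introthm_known_cases} applies, then apply the CM automorphy lifting theorems of \cite{AKT} to promote this to automorphy of $\rho|_{G_F}$, and finally descend from $F$ to $K$ by solvable cyclic base change for $\GL_2$.

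The main obstacle will be the totally even cases (2) and (5). There, the delicate part is producing the auxiliary totally odd representation so that residual automorphy is accessible via Theorem \ref{introthm_known_cases}, while simultaneously verifying the adequate/enormous-image hypotheses that \cite{AKT} requires of the residual representation. The hypothesis $[K(\zeta_5):K] = 4$ in case (2) is essential both to guarantee that $F = K(\zeta_5)$ is a CM quadratic extension of the totally real field $F^+$ and to ensure the mod-$5$ cyclotomic character has enough image to arrange these auxiliary constructions. A secondary technical step will be the descent: checking that the automorphic representation obtained over $F$ is invariant under $\Gal(F/K)$ and so arises by cyclic base change from $K$.
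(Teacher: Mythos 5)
The central issue is that your case division is backwards for $|k|=5$ and $|k|=9$. The obstruction to lifting $\sigma|_{G_{K_v}}$ at a real place $v$ to an $S$-type (i.e.\ $\det(c_v)=-1$) representation in $\GL_2(k)$ depends not on whether $\pdet\circ\sigma$ is even or odd per se, but on whether the parity of $\pdet\circ\sigma(c_v)$ matches the class of $-1$ in $k^\times/(k^\times)^2$. Since any lift $\rho$ of $\sigma$ satisfies $\det\rho\equiv\pdet\circ\sigma\pmod{(k^\times)^2}$, and twisting by a $k^\times$-valued character changes $\det\rho$ by a square, the condition $\det\rho(c_v)=-1$ is achievable if and only if $\pdet\circ\sigma(c_v)$ equals the class of $-1$ in $k^\times/(k^\times)^2$. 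Now $-1$ \emph{is} a square in $\bbF_5$ and $\bbF_9$ but is \emph{not} a square in $\bbF_7$. Hence the totally even cases (2) and (5) are exactly the ones where a totally odd lift in $\GL_2(k)$ exists after a solvable totally real base change (and Theorem~\ref{introthm_known_cases} then applies directly), while for case (3) ($|k|=5$, $\pdet\circ\sigma$ totally odd) \emph{no} twist over \emph{any} totally real extension can produce an $S$-type lift valued in $\GL_2(\bbF_5)$---only after enlarging the coefficients or passing to a CM extension does a totally odd lift exist. Your plan to ``twist a given lift so that $\det\rho(c_v)=-1$'' in case (3) therefore does not terminate inside $\GL_2(\bbF_5)$, and the hypothesis $[K(\zeta_5):K]=4$ cannot fix this; its actual role is to ensure $\sigma$ is non-exceptional so the automorphy lifting theorems may be applied. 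Conversely, your CM-extension approach in cases (2) and (5) is aimed at an obstruction that isn't there.

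Two smaller points. First, for $|k|=3$ your plan to lift to $\GL_2(\bbF_3)$ over a solvable totally real extension meets the same parity obstruction when $\pdet\circ\sigma$ is totally even (since $-1$ is not a square in $\bbF_3$); the paper instead invokes the Langlands--Tunnell theorem directly on the projective representation (using that $\PGL_2(\bbZ[\sqrt{-2}])\to\PGL_2(\bbF_3)$ splits), which sidesteps the issue. Second, the correct hard case is (3), and there the paper's strategy is: lift $\sigma$ to $\GL_2(\overline{\bbF}_5)$ and then to characteristic zero over $K$ via Theorem~\ref{thm:geo-lift}; pass to the quadratic CM extension $K'/K$ cut out by $(\pdet\circ\sigma)\omega$; lift $\sigma|_{G_{K'}}$ to a $\GL_2(\bbF_5)$-valued representation $\tau$ with $\det\tau=\overline\epsilon^{-1}$ using Lemma~\ref{lem_liftings_with_prescribed_determinant}; realize $\tau$ on the $5$-torsion of an elliptic curve over $K'$ via \cite{AKT}; and then use the CM automorphy lifting theorems of \cite{AKT} together with cyclic descent. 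You should reorganize your proof along those lines.
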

 We note  the exceptional isomophisms $\PSL_2(\F_9)=A_6$, $\PGL_2(\F_5)=S_5$, $\PGL_2(\F_3)=S_4$, $\PGL_2(\F_2)=S_3$ which link our results to showing that splitting fields of polynomials of small degree over $K$ arise automorphically. 

The proof of Theorem \ref{introthm_projective_cases} falls into three cases. The first is when $|k|$ is even or $k = \bbF_3$. When $|k|$ is even, the homomorphism $\GL_2(k) \to \PGL_2(k)$ splits, so we reduce easily to Theorem \ref{introthm_known_cases}. When $k = \bbF_3$, the homomorphism $\PGL_2(\bbZ[\sqrt{-2}]) \to \PGL_2(\bbF_3)$ splits and we can use the Langlands--Tunnell theorem \cite{Tunnell} to establish the automorphy of $\sigma$.

The second  case is when  $|k|$ is odd and $-1$ is a square in $k$ (resp. a non-square in $k$) and $\Delta \circ \sigma$ is totally even (resp.  totally odd). In this case we are able to construct the following data:
\begin{itemize}
\item A solvable totally real extension $L / K$ and a representation $\rhobar_1 : G_L \to \GL_2(k)$ such that $\Proj(\rhobar_1) = \sigma|_{G_L}$ (by showing that $L / K$ can be chosen to kill the Galois cohomological obstruction to lifting).
\item A representation $\rho_2 : G_K \to \GL_2(\overline{\bbQ}_p)$ such that $\Proj(\overline{\rho}_2)$ and $\sigma$ are conjugate in $\PGL_2(\overline{\bbF}_p)$ (by choosing an arbitrary lift of $\sigma$ to $\GL_2(\overline{\bbF}_p)$ and applying the Khare--Wintenberger method). 
\end{itemize}
We can then use Theorem \ref{introthm_known_cases} to verify the automorphy of $\rhobar_1$, hence the residual automorphy of $\rhobar_2|_{G_L}$. An automorphy lifting theorem then implies the automorphy of $\rho_2|_{G_L}$, hence $\rho_2$ itself by solvable descent, hence finally of $\sigma$.

The final case is when $k = \bbF_5$ and $\Delta \circ \sigma$ is totally odd. In this case there does not exist any totally real extension $L / K$ such that $\sigma|_{G_L}$ lifts to a representation valued in $\GL_2(k)$ (there is a local obstruction at the real places). However, it is possible to find a CM extension $L / K$ such that $\sigma|_{G_L}$ lifts to a representation valued in $\GL_2(k)$ with determinant the cyclotomic character. When $k = \bbF_5$ such a representation necessarily appears in the group of  5-torsion points of an elliptic curve over $L$ (cf. \cite{SBT}) and so we can use the automorphy results over CM fields established in \cite{AKT} together with a solvable descent argument to obtain the automorphy of $\sigma$.  The main novelty in this paper is contained in our treatment of this case.
\begin{remark}\label{rmk_tunnell}
In the final case above of a representation $\sigma : G_K \to \PGL_2(\bbF_5)$ with non-solvable image,  the residual automorphy of the lift $\rho : G_L \to \GL_2(\bbF_5)$ ultimately depends on \cite[Theorem 7.1]{AKT}, which proves the automorphy of certain residually dihedral 2-adic Galois representations. The residual automorphy of these 2-adic representations is verified using automorphic induction. In particular, our proof in this case does not depend on the use of the Langlands--Tunnell theorem. This is in contrast to the argument used in e.g.\ \cite[Theorem 4.1]{SBT} to establish the automorphy of representations $\rho' : G_K \to \GL_2(\bbF_5)$ with cyclotomic determinant.

This `2-3 switch' strategy can also be used to prove the automorphy of representations $\sigma : G_K \to \PGL_2(\bbF_3)$ with $\pdet \circ \sigma$ totally odd  using the 2-adic automorphy theorems proved in \cite{Allen}, see  Theorem \ref{LT} of the text. This class of representations includes the projective representations associated to the Galois action on the 3-torsion points of an elliptic curve over $K$. This gives a way to verify the modulo 3 residual automorphy of elliptic curves over $K$ which does not rely on the Langlands--Tunnell theorem (and in particular the works \cite{L, Jac81}) but only on the Saito--Shintani lifting for holomorphic Hilbert modular forms \cite{Sai75}.  (We note that we do need to use the Langlands-Tunnell theorem  to prove the automorphy of representations $\sigma : G_K \to \PGL_2(\bbF_3)$ with $\pdet \circ \sigma$ totally even, cf. Theorem \ref{known}.)
\end{remark}

We now describe the structure of this note. We begin in \S \ref{sec_lifting} by studying the lifts of projective representations and collecting various results about the existence of characteristic 0 lifts of residual representations and their automorphy. We are then able to give the proofs of Theorem \ref{introthm_known_cases} and the first two cases in the proof of Theorem \ref{introthm_projective_cases} described above. In \S \ref{sec_mod_3}, we expand on Remark \ref{rmk_tunnell} by showing how the main theorems of \cite{Allen} can be used to give another proof of the automorphy of $S$-type representations $\sigma : G_K \to \PGL_2(\bbF_3)$ (still under the hypothesis that $K$ is totally real and $\Delta \circ \sigma$ is totally odd). Finally, in \S \ref{sec_mod_5} we use similar arguments, now based on the main theorems of \cite{AKT}, to complete the proof of Theorem \ref{introthm_projective_cases}.

\subsection*{Acknowledgments} 

We would like to thank the anonymous referee for their comments and corrections. 
P.A. was supported by Simons Foundation Collaboration Grant 527275 and NSF grant DMS-1902155. He would like to thank the third author and Cambridge University for hospitality during a visit where some of this work was completed. Parts of this work were completed while P.A. was a visitor at the Institute for Advanced Study, where he was partially supported by the NSF. He would like to thank the IAS for providing excellent working conditions during his stay.  J.T.'s work received funding from the European Research Council (ERC) under the European Union's Horizon 2020 research and innovation programme (grant agreement No 714405).  This research was begun during the period that J.T. served as a Clay Research Fellow.

\subsection{Notation}\label{subsec_notation}

If $K$ is a perfect field then we write $G_K = \Gal(\overline{K} / K)$ for the Galois group of $K$ with respect to a fixed choice of algebraic closure. If $K$ is a number field and $v$ is a place of $K$ then we write $K_v$ for the completion of $K$ at $v$, and fix an embedding $\overline{K} \to \overline{K}_v$ extending the natural embedding $K \to K_v$; this determines an injective homomorphism $G_{K_v} \to G_K$. If $v$ is a finite place of $K$ then we write $\Frob_v \in G_{K_v}$ for a lift of the geometric Frobenius, $k(v)$ for the residue field of $K_v$, and $q_v$ for the cardinality of $K_v$; if $v$ is a real place, then we write $c_v \in G_{K_v}$ for complex conjugation. Any homomorphism from a Galois group $G_K$ to another topological group will be assumed to be continuous.

If $p$ is a prime and $K$ is a field of characteristic 0, then we write $\epsilon : G_K \to \bbZ_p^\times$ for the $p$-adic cyclotomic character, $\overline{\epsilon} : G_K \to \bbF_p^\times$ for its reduction modulo $p$, and $\omega : G_K \to  \bbF_p^\times / ( \bbF_p^\times)^2$ for the character $\overline{\epsilon} \text{ mod } ( \bbF_p^\times)^2$. More generally, if $\rho : G_K \to \GL_n(\overline{\bbQ}_p)$ is a representation, then we write $\rhobar : G_K \to \GL_n(\overline{\bbF}_p)$ for the associated semisimple residual representation (uniquely determined up to conjugation).

If $k$ is a field then we write $\Proj : \GL_n(k) \to \PGL_n(k)$ for the natural projection and $\pdet : \PGL_n(k) \to k^\times / (k^\times)^n$ for the character induced by the determinant. We will only use these maps in the case $n = 2$.

If $K$ is a field of characteristic 0, $E$ is an elliptic curve curve over $K$, and $p$ is a prime, then we write $\rhobar_{E, p} : G_K \to \GL_2(\bbF_p)$ for the representation associated to $H^1(E_{\overline{K}}, \bbF_p)$ after a choice of basis. Thus $\det \rhobar_{E, p} = \overline{\epsilon}^{-1}$.

\section{Lifting  representations}\label{sec_lifting}

In this section we study different kinds of liftings of representations: liftings to characteristic 0 (and the automorphy of such liftings) and liftings of projective representations to true (linear) representations. We begin by discussing what it means for a (projective or linear) representation to be automorphic.

\subsection{Automorphy of linear and projective representations}\label{subsec_automorphy}

Let $K$ be a CM or totally real number field. If $\pi$ is a cuspidal, regular algebraic automorphic representation of $\GL_2(\A_K)$ then (see e.g. \cite{Tay89, hltt}) for any isomorphism $\iota : \overline{\bbQ}_p \to \bbC$, there exists a semisimple representation $r_\iota(\pi) : G_K \to \GL_2(\overline{\bbQ}_p)$ satisfying the following condition, which determines $r_\iota(\pi)$ uniquely up to conjugation: for all but finitely many finite places $v$ of $K$ such that $\pi_v$ is unramified, $r_\iota(\pi)|_{G_{K_v}}$ is unramified and $r_\iota(\pi)|_{G_{K_v}}^{ss}$ is related to the representation $\iota^{-1}\pi_v$ under the Tate-normalised unramified local Langlands correspondence. (See \cite[\S 2]{Tho16} for an explanation of how the characterstic polynomial of $r_\iota(\pi)|_{G_{K_v}}$ may be expressed in terms of the eigenvalues explicit unramified Hecke operators.) In this paper we only need to consider automorphic representations which are of regular algebraic automorphic representations $\pi$ which are of weight 0, in the sense that for each place $v | \infty$ of $K$, $\pi_v$ has the same infinitesimal character as the trivial representation.

Let $k$ be a finite field of characteristic $p$, viewed inside its algebraic closure $\overline{\bbF}_p$. In this paper, we say that a representation $\rho : G_K \to \GL_2(k)$ is automorphic if it is $\GL_2(\overline{\bbF}_p)$-conjugate to a representation of the form $\overline{r_\iota(\pi)}$, where $\pi$ is a cuspidal, regular algebraic automorphic representation of $\GL_2(\bbA_K)$ of weight 0. We say that a representation $\sigma : G_K \to \PGL_2(k)$ is automorphic if it is $\PGL_2(\overline{\bbF}_p)$-conjugate to a representation of the form $\Proj(\overline{r_\iota(\pi)})$, where $\pi$ is a cuspidal, regular algebraic automorphic representation of $\GL_2(\bbA_K)$ of weight 0. 

We say that a representation $\rho : G_K \to \GL_2(\overline{\bbQ}_p)$ is automorphic if it is conjugate to a representation of the form $\overline{r_\iota(\pi)}$, where $\pi$ is a cuspidal, regular algebraic automorphic representation of $\GL_2(\bbA_K)$ of weight 0. We say that an elliptic curve $E$ over $K$ is modular if the representation of $G_K$ afforded by $H^1(E_{\overline{K}}, \bbQ_p)$ is automorphic in this sense. 
\begin{lemma}
Let $K$ be a CM or totally real number field, let $\rho : G_K \to \GL_2(k)$ be a representation, and let $\sigma = \Proj(\rho)$. Then:
\begin{enumerate}
\item Let $\chi : G_K \to k^\times$ be a character. Then $\rho$ is automorphic if and only if $\rho \otimes \chi$ is automorphic.
\item $\sigma$ is automorphic if and only if $\rho$ is automorphic.
\end{enumerate}
\end{lemma}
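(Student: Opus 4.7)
The plan is to prove (1) first by means of a Hecke-character twist, and then to deduce (2) as a short formal consequence of (1).

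For (1), suppose $\rho$ is automorphic, say $\GL_2(\overline{\bbF}_p)$-conjugate to $\overline{r_\iota(\pi)}$ for a cuspidal regular algebraic automorphic representation $\pi$ of $\GL_2(\bbA_K)$ of weight $0$. The character $\chi$ takes values in the finite group $k^\times$, whose order is coprime to $p$, so Teichm\"uller lifting produces a finite-order character $\widetilde{\chi} : G_K \to W(k)^\times \subset \overline{\bbZ}_p^\times$ reducing to $\chi$. I would then use $\iota$ and global class field theory to transport $\widetilde{\chi}$ to a finite-order Hecke character $\psi$ of $K$, and form the twist $\pi \otimes \psi$. Twisting a cuspidal $\GL_2$-representation by a character preserves cuspidality, and because $\psi$ is finite order each archimedean component $\psi_v$ has the same infinitesimal character as the trivial character of $K_v^\times$, so $\pi \otimes \psi$ is again regular algebraic of weight $0$. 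Comparing characteristic polynomials at unramified primes via local Langlands gives $r_\iota(\pi \otimes \psi) \cong r_\iota(\pi) \otimes \widetilde{\chi}$, and reduction modulo $p$ yields $\overline{r_\iota(\pi \otimes \psi)} \cong \overline{r_\iota(\pi)} \otimes \chi$. Hence $\rho \otimes \chi$ is automorphic. The reverse implication follows by running the same argument with $\chi^{-1}$ applied to $\rho \otimes \chi$.

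For (2), the forward direction is immediate: any $\GL_2(\overline{\bbF}_p)$-conjugacy between $\rho$ and $\overline{r_\iota(\pi)}$ descends to a $\PGL_2(\overline{\bbF}_p)$-conjugacy between $\sigma = \Proj(\rho)$ and $\Proj(\overline{r_\iota(\pi)})$. For the converse, suppose $\sigma$ is automorphic. After replacing $\rho$ by a $\GL_2(\overline{\bbF}_p)$-conjugate, which does not affect its automorphy, I may assume $\Proj(\rho) = \Proj(\overline{r_\iota(\pi)})$ as maps $G_K \to \PGL_2(\overline{\bbF}_p)$ for a suitable $\pi$. Two lifts of the same projective representation differ by a character, so $\overline{r_\iota(\pi)} = \rho \otimes \chi$ for some continuous character $\chi : G_K \to \overline{\bbF}_p^\times$. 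The argument of (1) extends verbatim to characters valued in $\overline{\bbF}_p^\times$ (after enlarging $k$ to contain the finite image of $\chi$), and so gives that $\rho = \overline{r_\iota(\pi)} \otimes \chi^{-1}$ is automorphic.

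The only substantive content is the Hecke-character twist in (1); the main point to check carefully is that twisting by a finite-order Hecke character preserves both the regular algebraic weight-$0$ condition and the compatibility of the associated Galois representations with local Langlands. Both facts are standard, but they are where the proof actually has some content; the rest is formal manipulation of the definitions and of projective versus linear representations.
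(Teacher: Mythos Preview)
Your proof is correct and follows essentially the same approach as the paper's own argument: both take the Teichm\"uller lift of $\chi$, pass to a finite-order Hecke character via class field theory, twist $\pi$ by this character, and use compatibility with local Langlands to identify the reduced Galois representation; part~(2) is then reduced to part~(1) by observing that two lifts of the same projective representation differ by a character valued in $\overline{\bbF}_p^\times$. You are slightly more explicit than the paper in noting that the weight-$0$ condition survives finite-order twist and that the character appearing in~(2) may require enlarging $k$, but these are expository refinements rather than a different route.
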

\begin{proof}
If $\chi : G_K \to k^\times$ is a character then its Teichm\"uller lift $X : G_K \to \overline{\bbQ}_p^\times$ is associated, by class field theory, to a finite order Hecke character $\Xi : \bbA_K^\times \to \bbC^\times$. If $\pi$ is a cuspidal automorphic representation which is regular algebraic of weight 0 and $\overline{r_\iota(\pi)}$ is conjugate to $\rho$, then $\pi \otimes (\Xi \circ \det)$ is also cuspidal and regular algebraic of weight 0 and $\overline{r_\iota(\pi \otimes (\Xi \circ \det))}$ is conjugate to $\rho \otimes \chi$.

It is clear from the definition that if $\rho$ is automorphic  then so is $\sigma$. Conversely, if $\sigma$ is automorphic then there is a cuspidal, regular algebraic automorphic representation $\pi$ of $\GL_2(\A_K)$ and isomorphism $\iota : \overline{\bbQ}_p \to \bbC$ such that $\Proj(\overline{r_\iota(\pi})) = \Proj(\rho)$. It follows that there exists a character $\chi : G_K \to \overline{\bbF}_p^\times$ such that $\rho$ is conjugate to $\overline{r_\iota(\pi)} \otimes \chi$. The automorphy of $\rho$ follows from the first part of the lemma.
\end{proof}
\subsection{Lifting to characteristic 0}

We recall a result on the existence of liftings with prescribed properties. We first need to say what it means for a representation to be exceptional. If $K$ is a number field and $\sigma : G_K \to \PGL_2(k)$ is a projective representation, we say that $\sigma$ is exceptional if it is $\PGL_2(\overline{\bbF}_p)$-conjugate to a representation $\sigma' : G_K \to \PGL_2(\bbF_5)$ such that $\sigma'(G_K)$ contains $\PSL_2(\bbF_5)$ and the character $(-1)^{\Delta \circ \sigma'} \overline{\epsilon}$ is trivial. (Here we write $(-1)^{\Delta \circ \sigma'} $ for the composition of $\Delta \circ \sigma'$ with the unique isomorphism $\bbF_5^\times / (\bbF_5^\times)^2 \cong \{ \pm 1 \}$.) We say that a representation $\rho : G_K \to \GL_2(k)$ is exceptional if $\Proj(\rho)$ is exceptional. If $K$ is totally real then this is equivalent to the definition given in \cite[\S 3]{khare-thorne-mathz17}. The exceptional case is often excluded in the statements of automorphy lifting theorems (the root cause being the non-triviality of the group $H^1(\sigma(G_K), \ad^0 \rho(1))$).
\begin{theorem}\label{thm:geo-lift}
Let $K$ be a totally real field, let $\rhobar : G_K \ra \GL_2(k)$ be a representation of $S$-type, and let $\psi : G_K \to \overline{\bbZ}_p^\times$ be a continuous character lifting $\det \rhobar$ such that $\psi\epsilon$ is of finite order.  Suppose that the following conditions are satisfied:
\begin{enumerate}
	\item $p > 2$ and  $\rhobar|_{G_{K(\zeta_p)}}$ is absolutely irreducible. 
	\item If $p = 5$ then $\rhobar$ is non-exceptional.
\end{enumerate}
 Then $\rhobar$ lifts to a continuous representation $\rho : G_K \to \GL_2(\overline{\bbZ}_p)$ satisfying the following conditions:
\begin{enumerate}
\item For all but finitely many places $v$ of $F$, $\rho|_{G_{K_v}}$ is unramified.
\item $\det \rho = \psi$.
\item For each place $v | p$ of $K$, $\rho|_{G_{K_v}}$ is potentially crystalline and for each embedding $\tau : K_v \to \overline{\bbQ}_p$, $\mathrm{HT}_\tau(\rho) = \{ 0, 1 \}$. 
Moreover, for any $v | p$ such that $\rhobar|_{G_{K_v}}$ is reducible, we can assume that $\rho|_{G_{K_v}}$ is ordinary, in the sense of \cite[\S 5.1]{Tho16}.
\end{enumerate}
\end{theorem}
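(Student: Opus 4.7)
The plan is to apply the Khare--Wintenberger lifting method, in its Ramakrishna-style incarnation, to a carefully chosen global deformation problem. Let $S$ be a finite set of places of $K$ containing all places where $\bar\rho$ ramifies, all $v\mid p$, and all infinite places. At each $v\in S$ I would impose a local deformation condition of fixed determinant $\psi|_{G_{K_v}}$: at finite $v\nmid p$ the minimally ramified condition; at $v\mid p$ with $\bar\rho|_{G_{K_v}}$ absolutely irreducible the Fontaine--Laffaille condition with labelled Hodge--Tate weights $\{0,1\}$; at $v\mid p$ with $\bar\rho|_{G_{K_v}}$ reducible the ordinary condition of \cite[\S 5.1]{Tho16}; and at real $v$ the totally odd condition, which is formally smooth of relative dimension $-1$. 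Each of the finite-place conditions is pro-representable, has characteristic zero points, and has tangent space of the dimension expected for the usual Euler characteristic bookkeeping.

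To produce a lift I would apply Ramakrishna's augmentation procedure: enlarge $S$ by a finite set $Q$ of auxiliary Taylor--Wiles primes, at which $\bar\rho(\Frob_v)$ has eigenvalues in the ratio $q_v$, equipped with Ramakrishna's deformation condition. A Chebotarev argument shows that for any nonzero class $\phi$ in the dual Selmer group $H^1_{\cS^\perp}(G_{K,S},\ad^0\bar\rho(1))$ one can find such a prime $v$ at which $\phi$ restricts nontrivially to the local cohomology; the Ramakrishna condition at $v$ then annihilates $\phi$. Iterating, one obtains $Q$ with $H^1_{(\cS\cup Q)^\perp}=0$. The standard Galois-cohomological Euler characteristic formula then shows that the universal deformation ring $R_{\cS\cup Q}$ is a finite $\overline{\bbZ}_p$-algebra of positive Krull dimension, and in particular admits a $\overline{\bbZ}_p$-valued point $\rho$, which by construction satisfies (1)--(3).

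The hard step, and the source of both hypotheses, is the Chebotarev argument producing the Ramakrishna primes. What is needed is, writing $L=\overline{K}^{\ker(\ad^0\bar\rho(1))}$, the vanishing of the inflation map $H^1(\Gal(L/K),(\ad^0\bar\rho(1))^{G_L})\to H^1(G_{K(\zeta_p)},\ad^0\bar\rho(1))$, together with a separation property for pairs of classes in the dual Selmer group. The absolute irreducibility of $\bar\rho|_{G_{K(\zeta_p)}}$ in hypothesis (1) implies that $\ad^0\bar\rho(1)$ is irreducible as a representation of $G_{K(\zeta_p)}$, and, combined with the classification of finite subgroups of $\PGL_2(\overline{\bbF}_p)$, this makes the required cohomological conditions automatic in every case except the single exceptional one at $p=5$ in which $\Proj\bar\rho(G_K)\supseteq \PSL_2(\bbF_5)\cong A_5$ and $(-1)^{\Delta\circ \Proj\bar\rho}\overline{\epsilon}=1$, where $A_5$ has nontrivial $H^1$ with coefficients in $\ad^0\bar\rho$. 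Ruling out precisely this case via hypothesis (2) is what allows Ramakrishna's argument to run to completion, and the theorem then follows.
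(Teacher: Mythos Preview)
The paper does not prove this from scratch: it simply invokes \cite[Theorem 7.6.1]{snowden2009dimensional}, observes that Snowden's hypothesis (A2) may be weakened to ``$\rhobar$ non-exceptional'' (since (A2) is used only to call \cite[Proposition 3.2.5]{Kis09a}, which is proved in that generality), and then verifies the required local input at $v\mid p$ by citing \cite[Proposition 7.8.1]{snowden2009dimensional} when $\rhobar|_{G_{K_v}}$ is irreducible and \cite[Lemma 6.1.6]{Bar12b} when it is reducible. Your sketch is, in outline, what is happening inside those references, and your analysis of why hypotheses (1) and (2) enter (via the group $H^1(\sigma(G_K),\ad^0\rhobar(1))$, which is exactly the obstruction the paper flags) is correct.

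There is, however, a genuine gap in the local conditions you set up. The Fontaine--Laffaille condition you impose at $v\mid p$ with $\rhobar|_{G_{K_v}}$ irreducible is only available when $K_v/\bbQ_p$ is unramified, and the theorem places no such restriction on $K$. In this generality the correct local condition is a potentially Barsotti--Tate (equivalently, potentially crystalline with Hodge--Tate weights $\{0,1\}$) deformation ring of some inertial type, and one must then check that $\rhobar|_{G_{K_v}}$ actually admits such a lift with the prescribed determinant; this liftability is not automatic and is exactly what the paper's citations to Snowden and to Barnet-Lamb--Gee--Geraghty supply (in Snowden's language, that $\rhobar|_{G_{K_v}}$ is ``compatible with type $A$ or $B$''). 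A smaller point: your passage from ``dual Selmer vanishes'' to ``$R_{\cS\cup Q}$ is finite over $\overline{\bbZ}_p$ of positive Krull dimension'' conflates two methods. A pure Ramakrishna argument, once the dual Selmer group is killed, yields that $R_{\cS\cup Q}$ is a power series ring over $\overline{\bbZ}_p$; the conclusion ``finite of positive dimension'' is instead the output of the Khare--Wintenberger method, which needs an additional $R=\bbT$ ingredient (via potential automorphy) to establish finiteness. Snowden's argument, and hence the paper's, is of the latter kind.
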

\begin{proof}
	This follows from \cite[Theorem 7.6.1]{snowden2009dimensional}, on noting that the condition (A2) there can be replaced by the more general condition that $\rhobar$ is non-exceptional (indeed, the condition (A2) is used to invoke \cite[Proposition 3.2.5]{Kis09a}, which is proved under this more general condition). To verify the existence of a potentially crystalline lift of $\rhobar|_{G_{K_v}}$ for each $v | p$ (or in the terminology of \emph{loc. cit.}, the compatibility of $\rhobar|_{G_{K_v}}$ with type $A$ or $B$) we apply \cite[Proposition 7.8.1]{snowden2009dimensional} (when $\rhobar|_{G_{K_v}}$ is irreducible) or \cite[Lemma 6.1.6]{Bar12b} (when $\rhobar|_{G_{K_v}}$ is reducible).
\end{proof}
We next recall an automorphy lifting theorem.
\begin{theorem}\label{thm_ALT}
Let $K$ be a totally real number field, and let $\rho : G_K \to \GL_2(\overline{\bbZ}_p)$ be a continuous representation satisfying the following conditions:
\begin{enumerate}
\item $p > 2$ and $\rhobar|_{G_{K(\zeta_p)}}$ is absolutely irreducible.
\item For all but finitely many finite places $v$ of $K$, $\rho|_{G_{K_v}}$ is unramified.
\item For each place $v | p$ of $K$, $\rho|_{G_{K_v}}$ is de Rham and for each embedding $\tau : K_v \to \overline{\bbQ}_p$, $\mathrm{HT}_\tau(\rho)  = \{ 0, 1 \}$.
\item The representation $\rhobar$ is automorphic.
\end{enumerate}
Then $\rho$ is automorphic.
\end{theorem}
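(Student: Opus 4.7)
The plan is to reduce Theorem \ref{thm_ALT} to a standard modularity lifting theorem for two-dimensional potentially Barsotti--Tate representations over totally real fields (for instance of the type proved by Kisin, Gee, or Barnet-Lamb--Gee--Geraghty--Taylor). Since the hypothesis $\mathrm{HT}_\tau(\rho) = \{0,1\}$ says exactly that $\rho$ is de Rham with Hodge--Tate weights corresponding to a parallel weight $2$ Hilbert modular form, and since a de Rham representation of a $p$-adic field with these Hodge--Tate weights becomes Barsotti--Tate after a finite extension, the bulk of the argument is a choice of solvable base change that puts $\rho$ in the hypotheses of the cited ALT.

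\textbf{Step 1 (base change).} Choose a solvable totally real extension $L/K$ with the following properties:
\begin{enumerate}
\item $L$ is linearly disjoint from $\overline{K}^{\ker\rhobar}(\zeta_p)$ over $K$, so that $\rhobar|_{G_{L(\zeta_p)}}$ remains absolutely irreducible and, when $p=5$, $\rhobar|_{G_L}$ remains non-exceptional if $\rhobar$ was.
\item For each place $w \mid p$ of $L$, $\rho|_{G_{L_w}}$ is crystalline with Hodge--Tate weights $\{0,1\}$ (i.e.\ genuinely Barsotti--Tate, not merely potentially so). Such an $L$ exists because each local representation $\rho|_{G_{K_v}}$ becomes crystalline over some finite extension of $K_v$, and these local extensions can be realized by a global solvable totally real extension by standard approximation.
\item At each finite place $w$ of $L$ not lying above $p$ where $\rho$ is ramified, $\rho|_{G_{L_w}}$ is either unramified or of a type (e.g.\ Steinberg or tamely ramified principal series) whose local deformation ring is smooth of the expected dimension.
\end{enumerate}

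\textbf{Step 2 (residual automorphy descends to $L$).} By Langlands's solvable cyclic base change for $\GL_2$, applied iteratively, the automorphy of $\rhobar$ over $K$ (given by hypothesis) implies automorphy of $\rhobar|_{G_L}$ over $L$. Linear disjointness from the splitting field of $\rhobar$ ensures the base-changed cuspidal automorphic representation remains cuspidal.

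\textbf{Step 3 (ALT over $L$).} Apply a standard minimal-type automorphy lifting theorem over the totally real field $L$: the hypotheses (Taylor--Wiles condition $\rhobar|_{G_{L(\zeta_p)}}$ absolutely irreducible, $\rho|_{G_{L_w}}$ crystalline of HT weights $\{0,1\}$ at $w\mid p$, good local behaviour away from $p$) are now all satisfied. One concludes that $\rho|_{G_L}$ is automorphic, associated to a cuspidal regular algebraic automorphic representation $\pi_L$ of $\GL_2(\A_L)$ of weight $0$.

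\textbf{Step 4 (solvable descent).} By the solvable descent theorem of Langlands--Arthur--Clozel applied to the tower $L/K$ (broken into cyclic steps), together with Brauer's theorem and the fact that $\rho$ itself is defined over $G_K$, the automorphic representation $\pi_L$ descends to a cuspidal regular algebraic automorphic representation $\pi$ of $\GL_2(\A_K)$ of weight $0$ with $r_\iota(\pi) \cong \rho$. Thus $\rho$ is automorphic.

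\textbf{Main obstacle.} The genuinely delicate ingredient is the modularity lifting theorem invoked in Step 3; the specific form one cites must simultaneously handle (i) the possibility that $\rhobar$ is exceptional when $p=5$ (since Theorem \ref{thm_ALT} does \emph{not} exclude this), and (ii) the ordinary/non-ordinary dichotomy at places above $p$. For (i), one uses the extra flexibility now available in modularity lifting (e.g.\ arguments involving a $2$-adic or $3$-adic switch, or the methods in the papers cited alongside \cite{Tho16}). For (ii), the statement of Theorem \ref{thm:geo-lift} already provides ordinary lifts where available, and modern ALTs allow one to treat the Fontaine--Laffaille/ordinary cases uniformly. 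All remaining work is routine bookkeeping to verify that the base change $L$ can be chosen to simultaneously satisfy conditions (1)--(3) of Step 1 without disturbing the residual hypothesis.
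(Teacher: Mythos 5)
The paper's own proof of Theorem~\ref{thm_ALT} is a single citation to \cite[Theorem~9.3]{khare-thorne-mathz17}, the Khare--Thorne automorphy result for residually $S_5$ representations whose entire raison d'\^etre is to prove a $p=5$ automorphy lifting theorem in the \emph{exceptional} case (i.e.\ without any ``bigness''/``adequacy'' hypothesis on the residual image). Your sketch has the correct overall shape for proving such a theorem from scratch --- solvable base change to simplify the local conditions, apply an ALT, descend --- but it has a genuine gap at exactly the point the paper is designed to handle: your Step~3 invokes a ``standard minimal-type automorphy lifting theorem'', and your ``Main obstacle'' paragraph acknowledges that this theorem must allow $p=5$ with $\rhobar$ exceptional, but then waves this away with references to a ``$2$-adic or $3$-adic switch''. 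That strategy (see Remark~\ref{rmk_tunnell}) is a device for establishing \emph{residual} automorphy, not for proving a $5$-adic ALT; changing $p$ does not help when you must lift a given $5$-adic representation. The failure of the Taylor--Wiles adequacy condition (nonvanishing of $H^1(\sigma(G_K),\ad^0\rhobar(1))$ in the exceptional case) is precisely what \cite[Theorem~9.3]{khare-thorne-mathz17} overcomes, and you cannot substitute a generic reference to ``standard'' ALTs for it, since most standard statements explicitly exclude this case.

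Two smaller inaccuracies: first, de Rham with $\mathrm{HT}_\tau = \{0,1\}$ gives \emph{potentially semi-stable}, not potentially crystalline, so in Step~1(2) you cannot always arrange $\rho|_{G_{L_w}}$ to be crystalline --- the local representation may become Steinberg (semi-stable non-crystalline), and the ALT you cite must handle that case (or you should allow it in your base-change conditions). Second, Brauer's theorem is not needed in Step~4; cyclic descent \cite{L} applied stepwise to a solvable tower suffices, exactly as in the proof of Proposition~\ref{realSolvableDescent}. Once you replace the vague ``standard ALT'' with the specific theorem of Khare--Thorne (at which point the rest of your base-change machinery becomes unnecessary, since that theorem is already stated over a general totally real field for de Rham lifts of HT weights $\{0,1\}$), your argument collapses to the paper's one-line proof.
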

\begin{proof}
This follows from  \cite[Theorem 9.3]{khare-thorne-mathz17}.
\end{proof}
We now combine the previous two theorems to obtain a ``solvable descent of automorphy'' theorem for residual representations, along similar lines to \cite{K, Tay}.
\begin{proposition}\label{realSolvableDescent}
Let $K$ be a totally real number field and let $\rhobar : G_K \ra \GL_2(k)$ be a representation of $S$-type. Suppose that there exists a solvable totally real extension $L / K$ such that the following conditions are satisfied:
	\begin{enumerate}
		\item $p > 2$ and $\rhobar|_{G_{L(\zeta_p)}}$ is absolutely irreducible. If $p = 5$, then $\rhobar$ is non-exceptional.
		\item $\rhobar|_{G_L}$ is automorphic.
	\end{enumerate}
Then $\rhobar$ is automorphic.
\end{proposition}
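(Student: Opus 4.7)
The plan is to lift $\rhobar$ to a characteristic zero representation $\rho : G_K \to \GL_2(\overline{\bbZ}_p)$, apply Theorem \ref{thm_ALT} over $L$ to obtain automorphy of $\rho|_{G_L}$, and then use solvable base change to descend automorphy of $\rho$ to $K$, whence automorphy of $\rhobar$ follows by reduction. To carry this out, first choose a character $\psi : G_K \to \overline{\bbZ}_p^\times$ lifting $\det \rhobar$ with $\psi\epsilon$ of finite order; for example, take $\psi = \chi \epsilon^{-1}$, where $\chi : G_K \to \overline{\bbZ}_p^\times$ is the Teichm\"uller lift of the finite order character $(\det \rhobar)\overline{\epsilon} : G_K \to k^\times$. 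Since $G_{L(\zeta_p)} \subseteq G_{K(\zeta_p)}$, absolute irreducibility of $\rhobar|_{G_{L(\zeta_p)}}$ passes to $\rhobar|_{G_{K(\zeta_p)}}$, and the non-exceptional condition at $p=5$ is part of the hypothesis. Theorem \ref{thm:geo-lift} therefore yields a continuous lift $\rho$ of $\rhobar$ with $\det \rho = \psi$, unramified outside a finite set and potentially crystalline at each place above $p$ with Hodge--Tate weights $\{0,1\}$ for every embedding.

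Next, consider $\rho|_{G_L}$. All hypotheses of Theorem \ref{thm_ALT} hold: $\rhobar|_{G_{L(\zeta_p)}}$ is absolutely irreducible by assumption, the ramification and Hodge--Tate conditions inherit from $\rho$, and $\rhobar|_{G_L}$ is automorphic by hypothesis. Hence $\rho|_{G_L}$ is automorphic, corresponding via some fixed $\iota : \overline{\bbQ}_p \to \bbC$ to a cuspidal, regular algebraic automorphic representation $\pi_L$ of $\GL_2(\A_L)$ of weight 0. To descend, filter $L/K$ as a tower of cyclic extensions of totally real fields and apply Langlands' cyclic base change for $\GL_2$ at each step: the intermediate automorphic representation is Galois-invariant at each stage because it is cut out by a Galois representation defined over the next smaller field in the tower. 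Iterating yields a cuspidal, regular algebraic automorphic representation $\pi$ of $\GL_2(\A_K)$ of weight 0 with $r_\iota(\pi)|_{G_L}$ conjugate to $\rho|_{G_L}$. Since $\rhobar|_{G_L}$ is absolutely irreducible, $r_\iota(\pi)$ and $\rho$ differ by a finite order character of $G_K$ that is trivial on $G_L$; twisting $\pi$ by the associated finite order Hecke character produces a cuspidal automorphic representation whose associated Galois representation is $\rho$, so $\rho$ is automorphic. Reducing modulo $p$, $\rhobar$ is automorphic.

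The main delicate point will be the solvable descent, which depends on the Langlands--Arthur--Clozel cyclic base change machinery. At each cyclic step one must verify that the Galois-invariant automorphic representation being descended remains cuspidal, equivalently, that the underlying Galois representation is not automorphically induced from any proper intermediate subfield; this is guaranteed by the absolute irreducibility of $\rhobar|_{G_L}$ (and hence of each intermediate restriction) throughout the tower. All other steps are direct applications of Theorems \ref{thm:geo-lift} and \ref{thm_ALT}.
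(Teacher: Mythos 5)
Your proof is correct and takes essentially the same route as the paper: lift $\rhobar$ via Theorem \ref{thm:geo-lift} (with $\psi\epsilon$ the Teichm\"uller lift of $(\det\rhobar)\overline{\epsilon}$), apply Theorem \ref{thm_ALT} over $L$, and conclude by cyclic descent via Langlands. You have simply unwound the details of the descent step that the paper leaves to the reader.
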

\begin{proof}
	Let $\psi : G_K \to \overline{\bbZ}_p^\times$ be the character such that $\psi \epsilon$ is the Teichm\"uller lift of $(\det \rhobar) \overline{\epsilon}$, and let $\rho : G_K  \to \GL_2(\overline{\bbZ}_p)$ be the lift of $\rho$ whose existence is asserted by Theorem~\ref{thm:geo-lift}. Then Theorem \ref{thm_ALT} implies the automorphy of $\rho|_{G_L}$, and the automorphy of $\rho$ itself and hence of $\rhobar$ follows by cyclic descent, using the results of Langlands \cite{L}.
\end{proof}
We can now give the proof of Theorem \ref{introthm_known_cases}, which we restate here for the convenience of the reader:
\begin{theorem}\label{thm-GL2}
Let $K$ be a totally real field and let $\rhobar:G_K \ra \GL_2(k)$ be a representation of $S$-type.
Suppose that $\lvert k \rvert \in \{2, 3, 4, 5, 7, 9\}$. Then $\rhobar$ is automorphic.
\end{theorem}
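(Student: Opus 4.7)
The plan is to prove Theorem~\ref{thm-GL2} by a case-by-case analysis on $|k|$, reducing each case to one of the classical automorphy results \cite{Tunnell, SBT, Man, Ellenberg} and using Proposition~\ref{realSolvableDescent} whenever the classical input requires passage to a solvable totally real extension.

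For $|k| \in \{2, 3\}$, the image of $\rhobar$ lies in the solvable group $\GL_2(\F_2) = S_3$, respectively $\GL_2(\F_3)$. First I would lift $\rhobar$ to a continuous complex Artin representation $\widetilde{\rho} : G_K \to \GL_2(\bbC)$ with solvable image; this is a standard consequence of Tate's theorem on lifting projective representations, together with the freedom to enlarge the ramification set. The Langlands-Tunnell theorem \cite{Tunnell}, combined with Langlands' cyclic base change for $\GL_2$ \cite{L}, then yields the automorphy of $\widetilde{\rho}$, and hence of $\rhobar$.

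For $|k| \in \{4, 5, 7, 9\}$, I would split into two subcases according to whether $\rhobar|_{G_{K(\zeta_p)}}$ is absolutely irreducible. If it is not, then $\rhobar$ is dihedral, i.e.\ induced from a character of a quadratic extension of $K$, and its automorphy follows from Hecke's classical construction of theta series. Otherwise, I would follow the strategy of \cite{SBT, Man, Ellenberg}: applying Hilbert irreducibility to a suitable twist of a modular curve (e.g.\ $X(p)$, or $X(9)$ when $|k| = 9$), I would produce a solvable totally real extension $L/K$ and an elliptic curve (or, for $|k|=9$, an abelian surface) over $L$ whose $p$-torsion Galois representation is conjugate to $\rhobar|_{G_L}$. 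Modularity of elliptic curves over totally real fields then delivers the automorphy of $\rhobar|_{G_L}$, and Proposition~\ref{realSolvableDescent} descends to give the automorphy of $\rhobar$ over $K$. For $|k| = 4$ I would use the exceptional isomorphism $\PGL_2(\F_4) \cong A_5 \cong \PSL_2(\F_5)$ to lift $\Proj(\rhobar)$ to an $\SL_2(\F_5)$-valued representation, possibly after a solvable totally real base change to kill the $H^2$-obstruction, reducing to the $|k|=5$ case.

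The main obstacle will be the exceptional $|k| = 5$ case, in which Proposition~\ref{realSolvableDescent} does not apply, because the relevant cohomology group $H^1(\sigma(G_K), \ad^0 \rho(1))$ is non-trivial. This case has to be treated directly, following \cite{SBT}: one constructs the desired modular elliptic curve over $K$ itself, without invoking solvable base change. A secondary bookkeeping subtlety is that twisting $\rhobar$ cannot always bring $\det \rhobar$ to $\overline{\epsilon}^{-1}$ (since $\F_p^\times / (\F_p^\times)^2$ need not be generated by $\omega$), so the classical arguments need to be applied to general odd determinant characters; this is already addressed in the cited papers.
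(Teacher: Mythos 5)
Your overall plan (case-by-case, classical constructions from \cite{Tunnell,SBT,Man,Ellenberg}, solvable descent) matches the shape of the paper's argument, but three of your specific claims have real problems.

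\textbf{The $|k|=5$ ``exceptional'' worry is a red herring.} You identify the exceptional $\bbF_5$ case as the main obstacle and propose to avoid base change there. In fact, a representation $\rhobar : G_K \to \GL_2(\bbF_5)$ of $S$-type over a totally real field is \emph{never} exceptional: $-1 = 2^2$ is a square in $\bbF_5$, so $\det\rhobar(c_v) = -1$ forces $\Delta\circ\Proj(\rhobar)$ to be totally even, while $\overline{\epsilon}$ is totally odd, so the character $(-1)^{\Delta\circ\sigma}\overline{\epsilon}$ is never trivial (this is \cite[Lemma 3.1]{khare-thorne-mathz17}, which the paper invokes exactly here). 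The paper therefore \emph{does} pass to the cyclic totally real extension $L$ cut out by $(\det\rhobar)\overline{\epsilon}$ — which makes the determinant cyclotomic over $L$, sidestepping the twisting obstruction you correctly point out — applies \cite[Theorem 1.2]{SBT} over $L$, and descends via Proposition~\ref{realSolvableDescent}. Nothing has to be done ``over $K$ itself.''

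\textbf{The $|k|=4$ reduction via $\PGL_2(\bbF_4)\cong\PSL_2(\bbF_5)$ does not go through as you describe.} Suppose you lift $\Proj(\rhobar)$, viewed in $\PSL_2(\bbF_5)$, to a representation $\tilde{\rhobar} : G_L \to \GL_2(\bbF_5)$ over some solvable totally real $L$ and prove $\tilde{\rhobar}$ automorphic. Automorphy of $\tilde{\rhobar}$ means it is a mod-5 reduction of some $r_\iota(\pi)$; it says nothing about $\rhobar$ being a mod-2 reduction of any $r_{\iota'}(\pi')$, which is what automorphy of the $\GL_2(\bbF_4)$-valued $\rhobar$ means. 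Moreover, even if you arranged to track mod-2 information, the descent step from $L$ to $K$ would need Proposition~\ref{realSolvableDescent} with $p=2$, where its hypotheses fail. The point of \cite[Theorem 3.4]{SBT}, which the paper uses, is precisely to build an abelian surface $A$ over $K$ (no base change) with real multiplication by $\cO_{\bbQ(\sqrt{5})}$ realizing $\rhobar$ on $A[2]$ \emph{and} a big-image representation on $A[\sqrt{5}]$: proving $A$ modular via the already-established $\bbF_5$ case and a $5$-adic lifting theorem then yields automorphy of $\rhobar$ through $A[2]$. Your exceptional-isomorphism shortcut lacks this geometric bridge linking the mod-2 and mod-5 objects.

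\textbf{``Modularity of elliptic curves over totally real fields'' is not an available black box.} The paper's argument is a bootstrap: Langlands--Tunnell gives $\bbF_3$; the $\bbF_3$ case plus an automorphy lifting theorem (Theorem~\ref{thm_ALT}) gives modularity of the auxiliary elliptic curves with controlled $3$-torsion produced by \cite{SBT, Man}, yielding $\bbF_5$ and $\bbF_7$; and the $\bbF_5$ case plus Theorem~\ref{thm:geo-lift} and Theorem~\ref{thm_ALT} gives modularity of the auxiliary RM abelian surfaces of \cite{SBT, Ellenberg}, yielding $\bbF_4$ and $\bbF_9$. The auxiliary torsion conditions ($\rhobar_{E,3}(G_L) \supseteq \SL_2(\bbF_3)$, or image of $A[\sqrt 5]$ containing $\SL_2(\bbF_5)$) are essential for the lifting theorem to apply; they cannot be replaced by an appeal to general modularity of elliptic curves, which is not a theorem over general totally real fields. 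You should also note, as a small point, that the $S$-type $\GL_2(\bbF_2)$ case is automatically dihedral (the image $S_3$ becomes reducible over $\bbF_4$), so Hecke suffices there and Langlands--Tunnell is not needed.

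Finally, a small inaccuracy: you suggest $X(9)$ for the $\bbF_9$ case, but since $\bbF_9\neq\bbF_3$ what is used (following Ellenberg) is a Hilbert modular variety parametrizing abelian surfaces with real multiplication by $\cO_{\bbQ(\sqrt{5})}$ (so that $A[3]$ is a $2$-dimensional $\bbF_9$-vector space) together with level structure at $\sqrt{5}$.
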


\begin{proof}
Many of the results we quote here are stated in the case of $K = \bbQ$ but hold more generally for totally real fields with minor modification. 
We will apply them in the more general setting without further comment.

If $\rhobar$ is dihedral then this is a consequence of results of Hecke (see \cite[\S5.1]{Duke}).
If $k = \bbF_3$, it is a consequence of the Langlands--Tunnell theorem \cite{Tunnell} (see the discussion following Theorem~5.1 in \cite[Chapter~5]{Wiles}). 
We may thus assume for the remainder of the proof that $\lvert k \rvert > 3$. 
We may also assume that for any abelian extension $L/K$, the restriction $\rhobar|_{G_{L(\zeta_p)}}$ is absolutely irreducible (as otherwise $\rhobar$ would be dihedral).

Next suppose that $k = \bbF_5$. We note that $\rhobar$ is not exceptional, by \cite[Lemma 3.1]{khare-thorne-mathz17}.  Let $L/K$ be the totally real cyclic extension cut out by $(\det\rhobar)\overline{\epsilon}$. 
By \cite[Theorem~1.2]{SBT}, there is an elliptic curve $E$ over $L$ such that $\rhobar_{E, 5} \cong \rhobar|_{G_L}$ and $\rhobar_{E, 3}(G_L)$ contains $\SL_2(\bbF_3)$. By the $k = \bbF_3$ case of the theorem and by Theorem \ref{thm_ALT}, we see that $E$ is automorphic, hence so is $\rhobar|_{G_{L}}$. 
The automorphy of $\rhobar$ then follows from Proposition~\ref{realSolvableDescent}.
The $k = \bbF_7$ case is similar, using \cite[Proposition~3.1]{Man} instead of \cite[Theorem~1.2]{SBT}.

Next suppose that $k = \bbF_4$. 
We can twist $\rho$ to assume that it is valued in $\SL_2(\bbF_4)$. 
Then \cite[Theorem~3.4]{SBT} shows that there is an abelian surface $A$ over $F$ with real multiplication by $\cO_{\bbQ(\sqrt{5})}$ such that the $G_K$-representation on $A[2] \cong \bbF_4^2$ is isomorphic to $\rhobar$ and such that the $G_K$-representation on $A[\sqrt{5}] \cong \bbF_5^2$ has image containing $\SL_2(\bbF_5)$. 
By the $k = \bbF_5$ case of the theorem, Theorem \ref{thm:geo-lift}, and Theorem \ref{thm_ALT}, we see that $A$ is automorphic, hence so is $\rhobar$.

Finally suppose that $k = \bbF_9$. 
Let $L/K$ be the totally real cyclic extension cut out by $(\det\rhobar)\overline{\epsilon}$. 
Then the argument of \cite[\S 2.5]{Ellenberg} shows that there is a solvable totally real extension $M / K$ containing $L / K$ and an abelian surface $A$ over $M$ with real multiplication by $\cO_{\bbQ(\sqrt{5})}$ such that the $G_{M}$-representation on $A[3] \cong \bbF_9^2$ is isomorphic to $\overline{\rho}|_{G_{M}} \otimes\overline{\epsilon}$ and such that the $G_{M}$-representation on $A[\sqrt{5}] \cong \bbF_5^2$ has image containing $\SL_2(\bbF_5)$. 
By the $k = \bbF_5$ case of the theorem, Theorem \ref{thm:geo-lift}, and Theorem \ref{thm_ALT}, we see that $A$ is automorphic, hence so is $\rhobar|_{G_{M}}$. 
The automorphy of $\rhobar$ follows from Proposition~\ref{realSolvableDescent}.
\end{proof}

\begin{remark}
The `2-3 switch’ strategy  employed in Theorem \ref{LT} below, can be used to prove automorphy of totally odd representations $\rho : G_K \to \GL_2(\bbF_3)$ without using the Langlands-Tunnell theorem.  

\end{remark}

\subsection{Lifting projective representations}\label{subsec_lifting_projective}

We now consider the problem of lifting projective representations. 
\begin{lemma}\label{lem_tate}
Let $K$ be a number field, and let $\sigma : G_K \to \PGL_2(k)$ be a continuous homomorphism. Then there exists a finite extension $k' / k$ such that $\sigma$ lifts to a homomorphism $\rho : G_K \to \GL_2(k')$.
\end{lemma}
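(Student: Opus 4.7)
The plan is to encode the lifting problem as a $2$-cocycle class in Galois cohomology with finite discrete trivial coefficients, and then to kill it after enlarging $k$ by appealing to Tate's cohomological vanishing theorem for absolute Galois groups of global fields.

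Since $\sigma$ is continuous and $\PGL_2(k)$ is finite, the image $H = \sigma(G_K)$ is a finite subgroup of $\PGL_2(k)$; its preimage $\widetilde{H} \subseteq \GL_2(k)$ sits in a central extension
\begin{equation*}
1 \longrightarrow k^\times \longrightarrow \widetilde{H} \longrightarrow H \longrightarrow 1.
\end{equation*}
Any set-theoretic section $s \colon H \to \widetilde{H}$ produces a continuous lift $\rho_0 = s \circ \sigma \colon G_K \to \GL_2(k)$ of $\sigma$ whose failure to be a homomorphism is recorded by the continuous $2$-cocycle
\begin{equation*}
c(g,h) = \rho_0(g) \rho_0(h) \rho_0(gh)^{-1} \in k^\times \subseteq Z(\GL_2(k)).
\end{equation*}
For any finite extension $k'/k$, continuous lifts $\rho \colon G_K \to \GL_2(k')$ of $\sigma$ correspond bijectively, via $\rho = d^{-1} \rho_0$, to continuous $1$-cochains $d \colon G_K \to (k')^\times$ with $\delta d = c$. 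Hence the lemma reduces to showing that the image of $[c]$ in $H^2(G_K, (k')^\times)$ (trivial action) can be made to vanish by a suitable choice of $k'$.

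Because $c$ takes only finitely many values in $k^\times$, the class $[c]$ lies in $H^2(G_K, \mu_m)$ for some finite cyclic group $\mu_m \subseteq k^\times$ of order coprime to $\mathrm{char}(k)$. At this point I would invoke the theorem of Tate that, for an absolute Galois group $G_K$ of a number field, such a class can always be killed after enlarging the coefficient group to a finite cyclic $\mu_{mn} \supseteq \mu_m$: concretely, the image of $[c]$ under the inclusion-induced map $H^2(G_K, \mu_m) \to H^2(G_K, \mu_{mn})$ vanishes for suitable $n$. Tate's argument is global and uses class field theory (in particular Poitou--Tate duality) to absorb local obstructions at the real places of $K$, which is the main subtlety of the proof.

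Granted this, choose a continuous cochain $d \colon G_K \to \mu_{mn}$ with $\delta d = c$; the compactness of $G_K$ and discreteness of $\mu_{mn}$ force $d$ to factor through a finite quotient. Taking $k' = k(\mu_{mn})$, a finite extension of $k$, the formula $\rho = d^{-1}\rho_0$ defines the desired continuous homomorphism $G_K \to \GL_2(k')$ with $\Proj \circ \rho = \sigma$. The main obstacle is the appeal to Tate's cohomological vanishing; the remainder is a direct manipulation of cochains and is insensitive to the detailed structure of $\sigma$.
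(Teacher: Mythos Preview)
Your proof is correct and follows the same route as the paper: the obstruction to lifting is a class in $H^2(G_K,k^\times)$, and Tate's theorem (the paper cites \cite[\S 6.5]{Ser77}) asserts that this class dies in $H^2(G_K,\overline{k}^\times)$, which is precisely your statement that it vanishes in $H^2(G_K,\mu_{mn})$ for some $n$. The paper compresses this into a single line and leaves the descent to a finite $k'$ implicit, while you spell out the cochain bookkeeping and the choice $k'=k(\mu_{mn})$; the substance is identical.
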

\begin{proof}
The obstruction to lifting a continuous homomorphism $\sigma : G_K \to \PGL_2(k)$ to a continuous homomorphism $\rho : G_K \to \GL_2(\overline{k})$ lies in $H^2(G_K, \overline{k}^\times) $. Tate proved that $H^2(G_K, \overline{k}^\times) = 0$ (see \cite[\S 6.5]{Ser77}) so a lift always exists. 
\end{proof}
\begin{lemma}\label{lem_lifting_over_solvable_extension}
Suppose that $p > 2$, let $K$ be a number field, and let $\sigma : G_K \to \PGL_2(k)$ be a homomorphism. Let $S$ be a finite set of places of $K$ such that for each $v \in S$, there exists a lift of $\sigma|_{G_{K_v}}$ to a homomorphism $\rho_v : G_K \to \GL_2(k)$. Then we can find the following data:
\begin{enumerate}
\item A solvable $S$-split extension $L / K$.
\item A homomorphism $\rho : G_L \to \GL_2(k)$ such that $\Proj(\rho) = \sigma|_{G_L}$ and for each $v \in S$ and each place $w | v$ of $L$, $\rho|_{G_{L_w}} = \rho_v$.
\end{enumerate}
Moreover, if $K$ is a CM field we can choose $L$ also to be a CM field. 
\end{lemma}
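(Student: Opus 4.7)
The plan is to identify the obstruction to lifting $\sigma$ to $\GL_2(k)$ as a class in $H^2(G_K, k^\times)$, kill it by passing to the cyclic extension it cuts out (which automatically splits at $S$ by the local lifting hypothesis), and then adjust the resulting lift by a global character so as to match the prescribed local data $\rho_v$ via Grunwald--Wang.

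Concretely, the central extension $1 \to k^\times \to \GL_2(k) \to \PGL_2(k) \to 1$ associates to $\sigma$ an obstruction class $c_\sigma \in H^2(G_K, k^\times)$. Applying Hilbert's Theorem~90 and Tate's theorem (used in the proof of Lemma~\ref{lem_tate}) to the short exact sequence $1 \to k^\times \to \overline{k}^\times \to \overline{k}^\times/k^\times \to 1$ identifies $H^2(G_K, k^\times)$ with $\Hom_{\mathrm{cts}}(G_K, \overline{k}^\times/k^\times)$, so $c_\sigma$ corresponds to a continuous character $\eta : G_K \to \overline{k}^\times/k^\times$ with finite cyclic image (since every finite subgroup of $\overline{k}^\times/k^\times$ is cyclic). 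The existence of $\rho_v$ at each $v \in S$ forces $\eta|_{G_{K_v}} = 0$. Let $L_0/K$ be the cyclic extension cut out by $\ker \eta$; then every $v \in S$ splits completely in $L_0$, and $\sigma|_{G_{L_0}}$ admits a lift $\tilde\rho : G_{L_0} \to \GL_2(k)$. For each $v \in S$ and each $w \mid v$ of $L_0$ we have $L_{0,w} = K_v$, so $\tilde\rho|_{G_{L_{0,w}}}$ and $\rho_v$ both project to $\sigma|_{G_{K_v}}$ and differ by a character $\psi_{v,w} : G_{K_v} \to k^\times$. By Grunwald--Wang one finds a global character $\psi : G_{L_0} \to k^\times$ with $\psi|_{G_{L_{0,w}}} = \psi_{v,w}$ outside the Wang exceptional case, which in turn is circumvented by a further solvable $S$-split enlargement of $L_0$. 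Setting $\rho := \tilde\rho|_{G_L} \otimes \psi^{-1}$, with $L$ the final extension, gives the required data, since then $\rho|_{G_{L_w}} = (\psi_{v,w} \rho_v) \cdot \psi_{v,w}^{-1} = \rho_v$.

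For the CM case, I would perform the entire construction inside the maximal CM extension of the totally real subfield $K^+ \subset K$: replace $L_0$ (if necessary) by its smallest CM envelope, for instance the maximal CM subfield of the compositum of $L_0$ with its complex-conjugate field, and then choose $\psi$ in Grunwald--Wang within the CM tower. The main obstacle is handling these conditions simultaneously---matching the local data at $S$ while keeping the extension solvable, $S$-split, and, in the CM case, CM. The Wang exceptional case genuinely needs attention, since $|k^\times| = p^f - 1$ may be divisible by high powers of $2$ even when $p > 2$; the auxiliary cyclotomic enlargements used to defeat it must be chosen compatibly with the splitting behavior at $S$ and, in the CM setting, with the action of complex conjugation.
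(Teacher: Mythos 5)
Your approach---convert the obstruction class in $H^2(G_K, k^\times)$ into a character via Tate's vanishing theorem, kill it by a single cyclic extension, then patch local data with Grunwald--Wang---is a reasonable idea, but there are genuine gaps, and the paper's proof is structured quite differently precisely to avoid them.

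The central error is the assertion that the local lifting hypothesis ``forces $\eta|_{G_{K_v}} = 0$.'' First, the long exact sequence attached to $1 \to k^\times \to \overline{k}^\times \to \overline{k}^\times/k^\times \to 1$ gives a \emph{surjection} $\Hom_{\mathrm{cts}}(G_K, \overline{k}^\times/k^\times) \twoheadrightarrow H^2(G_K, k^\times)$, not an isomorphism; the kernel is the image of $\Hom_{\mathrm{cts}}(G_K, \overline{k}^\times)$, which is large, so ``the'' character $\eta$ is a non-canonical choice. (Hilbert~90 is not relevant here, since $G_K$ acts trivially on these coefficient modules.) Second, and more seriously: the existence of a local lift $\rho_v : G_{K_v} \to \GL_2(k)$ only gives $c_\sigma|_{G_{K_v}} = 0$, and hence only that $\eta|_{G_{K_v}}$ lies in the image of $\Hom(G_{K_v}, \overline{k}^\times)$---it does \emph{not} give $\eta|_{G_{K_v}} = 0$. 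Consequently the extension $L_0 = \overline{K}^{\ker\eta}$ need not be $S$-split, and the claim that $L_{0,w} = K_v$ for $w \mid v \in S$ fails. Repairing this would require modifying $\eta$ by a global character chosen (via Grunwald--Wang again) to trivialise the local restrictions at $S$, which just pushes the difficulty into a second Grunwald--Wang problem.

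The Wang exceptional case, which you correctly flag, is not merely a side concern: $|k^\times| = |k| - 1$ is even, so the $2$-primary part is always present, and ``circumvent by a further solvable $S$-split enlargement'' is precisely the step that needs a careful argument (one must ensure that adjoining the relevant roots of unity is compatible with splitting at $S$ and, in the CM case, with complex conjugation). The paper sidesteps all of this by a different decomposition: writing $k^\times = H \times H'$ with $H$ the $2$-Sylow subgroup of order $2^m$, it splits $\GL_2(k) \cong G_m \times H'$ and handles the odd-order factor $H'$ by a direct globalisation of characters (no exceptional case for odd order), while the $2$-primary part $H$ is climbed one $\bbZ/2\bbZ$-layer at a time. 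At each layer the obstruction lives in $H^2(\,\cdot\,, \bbZ/2\bbZ)$, is locally trivial at $S$, and can be killed by a solvable $S$-split extension; and the adjustment by a $\bbZ/2\bbZ$-valued character is always possible, since the Grunwald--Wang exceptional case does not occur for $\bbZ/2\bbZ$-valued characters. This layer-by-layer structure is the key idea your argument is missing, and it also makes the CM refinement transparent: the auxiliary extensions in the induction can each be taken of the form $K E'$ with $E'/K^+$ totally real, whereas your ``maximal CM envelope'' suggestion is not well-defined as stated.
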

\begin{proof}
Let $H$ denote the $2$-Sylow subgroup of $k^\times$, of order $2^m$, and let $H' \leq k^\times$ denote its prime-to-2 complement. If $0 \leq k \leq m$, we write $G_k = \GL_2(k) / (2^{m-k} H \times H')$, which is an extension
\[ 1 \to H / 2^{m-k} H \to G_k \to \PGL_2(k) \to 1. \]
We show by induction on $k \geq 0$ that we can find a solvable, $S$-split extension $L_k / K$ and a homomorphism $\rho_k : G_{L_k} \to G_k$ lifting $\sigma|_{G_{L_k}}$ and such that for each $v \in S$ and each place $w | v$ of $L_k$, $\rho_k|_{G_{L_{k, w}}} = \rho_v \text{ mod }2^{m-k} H \times H'$. The case $k = 0$ is the existence of $\sigma$. The case $k = m$ implies the statement of the lemma, since $\GL_2(k) = G_m \times H'$. (Note \cite[Ch. X, Theorem 5]{Art09} implies that any collection of characters $\chi_v : G_{K_v} \to H'$ can be globalised to a character $\chi : G_K \to H'$.)

For the induction step, suppose the induction hypothesis holds for a fixed value of $k$. We consider the obstruction to lifting $\rho_k$ to a homomorphism $\rho_{k+1} : G_{L_k} \to G_{k+1}$. This defines an element of $H^2(G_{L_k}, \bbZ / 2 \bbZ)$ which is locally trivial at the places of $L_k$ lying above $S$. We can therefore find an extension of the form $L_{k+1} = L_k \cdot E_{k+1}$, where $E_{k+1} / K$ is a solvable $S$-split extension, such that the image of this obstruction class in $H^2(G_{L_{k+1}}, \bbZ / 2 \bbZ)$ vanishes and so there is a homomorphism $\rho'_{k+1} : G_{L_{k+1}} \to G_{k+1}$ lifting $\rho_k|_{G_{L_{k+1}}}$. 

If $v \in S$ and $w | v$ is a place of $L_{k+1}$ then there is a character $\chi_w : G_{L_{k+1, w}} \to \bbZ / 2 \bbZ$ such that $\rho'_{k+1}|_{G_{L_{k+1, w}}} = (\rho_v \text{ mod }2^{m-(k+1)} H \times H')\cdot \chi_w$. We can certainly find a character $\chi : G_{L_{k+1}} \to \bbZ / 2 \bbZ$ such that $\chi|_{G_{L_{k+1, w}}} = \chi_w$ for each such place $w$. The induction step is complete on taking $\rho_{k+1} = \rho'_{k+1} \cdot \chi$. 

It remains to explain why we can choose $K$ to be CM if $L$ is. Since the extensions $E_k$ in the proof are required only to satisfy some local conditions, which are vacuous if $K$ is CM, we can choose the fields $E_k$ to be of the form $KE_k’$ where $E_k’$ is a totally real extension, in which case the field  $L$  constructed in the proof  is  seen to be CM.
\end{proof}

\begin{remark}\label{utility}
We remark that if $v$ is a real place of $K$ and $\sigma(c_v) \neq 1$, then there exists a lift of $\sigma|_{G_{K_v}}$ to $\GL_2(k)$ if and only if either $-1$ is a square in $k^\times$ and $\Delta \circ \sigma(c_v) = 1$, or $-1$ is not a square in $k^\times$ and $\Delta \circ \sigma(c_v) \neq 1$. We also note the utility of the `$S$-split' condition: we can add any set of places at which $\sigma$ is unramified to $S$, and in this way ensure that the $S$-split extension $L / K$ is linearly disjoint from any other fixed finite extension of $K$.
\end{remark}

Here is a variant.
\begin{lemma}\label{lem_liftings_with_prescribed_determinant}
Suppose that $p > 2$. Let $K$ be a number field, let $\sigma : G_K \to \PGL_2(k)$ be a homomorphism, and let $\chi : G_K \to k^\times$ be a character. Suppose that the following conditions are satisfied:
\begin{enumerate}
\item $\Delta \circ \sigma = \chi \text{ mod }(k^\times)^2$.
\item For each finite place $v$ of $K$, $\sigma|_{G_{K_v}}$ and $\chi|_{G_{K_v}}$ are unramified.
\item For each real place $v$ of $K$, $\sigma(c_v) \neq 1$ and $\chi(c_v) = -1$.
\end{enumerate}
Then there exists a homomorphism $\rho : G_K \to \GL_2(k)$ such that $\Proj(\rho) = \sigma$ and $\det(\rho) = \chi$.
\end{lemma}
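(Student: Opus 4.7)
The plan is to realize the existence of $\rho$ as the vanishing of an obstruction in $H^2(G_K, \mu_2)$ coming from a central extension, verify that this obstruction is locally trivial at every place, and then invoke a Hasse principle to conclude that it vanishes globally.

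First I would set up the central extension. The map $(\Proj, \det) : \GL_2(k) \to \PGL_2(k) \times k^\times$ has kernel $\{\pm I\} \cong \mu_2$ and image the subgroup $H = \{(g, t) \in \PGL_2(k) \times k^\times : \Delta(g) = t \text{ in } k^\times/(k^\times)^2\}$, yielding a central extension
\[ 1 \to \mu_2 \to \GL_2(k) \to H \to 1. \]
Hypothesis (1) is exactly the statement that $(\sigma, \chi)$ defines a homomorphism $G_K \to H$, and any lift of it to $\GL_2(k)$ automatically realizes $\Proj \rho = \sigma$ and $\det \rho = \chi$. The obstruction to such a lift defines a class $\alpha \in H^2(G_K, \mu_2)$.

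Next I would verify the local triviality of $\alpha$ place by place. The complex-place case is immediate since $H^2(G_{K_v}, \mu_2) = 0$. At a real place $v$, any lift $\tilde g \in \GL_2(k)$ of $\sigma(c_v)$ satisfies $\tilde g^2 = \lambda I$ for some $\lambda \in k^\times$ and $\det \tilde g = -\lambda$; a rescaling $\mu \tilde g$ is of order $2$ iff $\lambda \in (k^\times)^2$, and this condition is equivalent to $\Delta \sigma(c_v) = -1$ in $k^\times/(k^\times)^2$, which is exactly what hypotheses (1) and (3) provide. Such an order-$2$ lift then automatically has determinant $-1$. At a finite place $v$, hypothesis (2) makes $(\sigma, \chi)|_{G_{K_v}}$ factor through the unramified quotient $G_{K_v}/I_{K_v} \cong \widehat{\bbZ}$, which has cohomological dimension $1$, so a local lift exists trivially.

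Finally I would conclude by appealing to Poitou--Tate duality, which identifies $\Sha^2(G_K, \mu_2)$ with the dual of $\Sha^1(G_K, \bbZ/2\bbZ)$; the latter vanishes because a quadratic character of $G_K$ that is locally trivial at every place is globally trivial by Chebotarev. Hence $\alpha = 0$ and the lift $\rho$ exists globally. The step I expect to be the main obstacle is the real-place analysis, where hypothesis (3) must be used precisely to guarantee the existence of the order-$2$ lift with determinant $-1$.
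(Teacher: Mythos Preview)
Your proof is correct and follows essentially the same approach as the paper: both set up the central extension $1 \to \{\pm 1\} \to \GL_2(k) \to \PGL_2(k) \times_\Delta k^\times \to 1$, verify local liftability of $(\sigma,\chi)$ at every place, and conclude via the Hasse principle for $H^2(G_K,\{\pm 1\})$. Your write-up is more explicit than the paper's (which simply cites an earlier remark for the real places and ``class field theory'' for $\Sha^2=0$), but the argument is the same; in particular your real-place computation showing that a non-scalar lift $\tilde g$ of $\sigma(c_v)$ has trace $0$ and $\det\tilde g=-\lambda$, hence admits an order-$2$ rescaling precisely when $\Delta\sigma(c_v)=-1$ in $k^\times/(k^\times)^2$, is exactly the content behind the paper's reference.
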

\begin{proof}
We consider the short exact sequence of groups
\[ 1 \to \{ \pm 1 \} \to \GL_2(k) \to \PGL_2(k) \times_{\Delta} k^\times \to 1, \]
where the last group is the subgroup of $(g, \alpha) \in \PGL_2(k) \times k^\times$ such that $\Delta(g) = \alpha \text{ mod }(k^\times)^2$. By hypothesis the pair $(\sigma, \chi)$ defines a homomorphism $\Sigma : G_K \to \PGL_2(k) \times_\Delta k^\times$ such that for every place $v$ of $K$, $\Sigma|_{G_{K_v}}$ lifts to $\GL_2(k)$ (see Remark~\ref{utility}). The subgroup of locally trivial elements of $H^2(G_K, \{ \pm 1 \})$ is trivial, by class field theory, so $\Sigma$ lifts to a homomorphism $\rho : G_K \to \GL_2(k)$, as required.
\end{proof}
We now prove an analogue of Proposition \ref{realSolvableDescent} for projective representations.
\begin{proposition}\label{prop_projective_solvable_descent}
Let $K$ be a totally real number field and let $\sigma : G_K \to \PGL_2(k)$ be a representation of $S$-type. Suppose that there exists a solvable totally real extension $L / K$ satisfying the following conditions:
\begin{enumerate}
\item $p > 2$ and $\sigma|_{G_{L(\zeta_p)}}$ is absolutely irreducible. If $p = 5$, then $\sigma$ is non-exceptional.
\item $\sigma|_{G_L}$ is automorphic.
\end{enumerate}
Then $\sigma$ is automorphic.
\end{proposition}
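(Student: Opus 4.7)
The plan is to reduce the statement to its linear-representation analogue, Proposition~\ref{realSolvableDescent}. First, I would apply Lemma~\ref{lem_tate} to choose a finite extension $k'/k$ and a continuous lift $\rho : G_K \to \GL_2(k')$ of $\sigma$. Next, I would check that $\rho$ is of $S$-type. A two-dimensional representation is absolutely irreducible exactly when its projectivisation is, which handles irreducibility. For each real place $v$ of $K$, the relation $c_v^2 = 1$ forces $\rho(c_v)^2 = I$; since $\sigma(c_v) \neq 1$ by hypothesis, $\rho(c_v) \neq \pm I$, so its eigenvalues are $\{1,-1\}$ and $\det \rho(c_v) = -1$, giving total oddness.

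With $\rho$ in hand, one verifies the hypotheses of Proposition~\ref{realSolvableDescent} for the same solvable totally real extension $L/K$. Absolute irreducibility of $\rho|_{G_{L(\zeta_p)}}$ follows from the corresponding hypothesis on $\sigma$. When $p = 5$, non-exceptionality of $\rho$ holds by the very definition of exceptionality, which is a condition on $\Proj(\rho) = \sigma$. Finally, the automorphy of $\rho|_{G_L}$ follows from that of $\sigma|_{G_L} = \Proj(\rho|_{G_L})$ using part~(2) of the lemma in \S\ref{subsec_automorphy}; although that lemma is stated for $\GL_2(k)$-valued representations, its proof applies verbatim with $k$ replaced by $k'$, since all notions of automorphy are framed inside $\GL_2(\overline{\bbF}_p)$ and $\PGL_2(\overline{\bbF}_p)$.

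Applying Proposition~\ref{realSolvableDescent} to $\rho$ then yields its automorphy, whence the automorphy of $\sigma = \Proj(\rho)$. There is no substantive obstacle here beyond bookkeeping: the enlargement of the coefficient field when lifting $\sigma$ is harmless, and every hypothesis in the linear version transfers directly from the projective one. In short, the proposition is Proposition~\ref{realSolvableDescent} composed with Lemma~\ref{lem_tate} and the automorphy-equivalence lemma of \S\ref{subsec_automorphy}.
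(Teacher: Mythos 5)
Your proposal is correct and follows the same route as the paper: lift $\sigma$ to $\GL_2(k')$ via Lemma~\ref{lem_tate}, transfer the hypotheses (in particular the $S$-type condition, whose oddness you verify directly from $\sigma(c_v)\neq 1$ and $p>2$), and then invoke Proposition~\ref{realSolvableDescent} together with the automorphy-equivalence lemma of \S\ref{subsec_automorphy}. The paper's own proof is just a terser rendition of exactly this reduction, leaving the bookkeeping you carried out as implicit.
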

\begin{proof}
By Lemma \ref{lem_tate}, we can lift $\sigma$ to a representation $\rhobar : G_K \to \GL_2(\overline{k})$. Then $\rhobar|_{G_L}$ is automorphic and we can apply Proposition \ref{realSolvableDescent} to conclude that $\rhobar$ is automorphic, hence that $\sigma$ is automorphic.
\end{proof}
We are now in a position to establish a large part of Theorem \ref{introthm_projective_cases}.
\begin{theorem}\label{known}
Let $K$ be a totally real number field and let $\sigma : G_K \to \PGL_2(k)$ be a representation of $S$-type. If one of the following conditions holds, then $\sigma$ is automorphic: 
\begin{enumerate}
\item $|k| \in \{ 2, 3, 4 \}$.
\item $|k| = 5$ or $9$ and $\pdet \circ \sigma$ is totally even. If $|k| = 5$, then $\sigma$ is non-exceptional.
\item $|k| = 7$ and $\pdet \circ \sigma$ is totally odd.
\end{enumerate}
\end{theorem}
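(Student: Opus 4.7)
The plan is to divide by the characteristic of $k$. For $|k| \in \{2, 4\}$ (characteristic $2$), I would exploit that $k^\times$ has odd order, so squaring is bijective on $k^\times$ and the natural map $\SL_2(k) \to \PGL_2(k)$ is an isomorphism. Composing its inverse with $\SL_2(k) \hookrightarrow \GL_2(k)$ lifts $\sigma$ to a representation $\rho : G_K \to \GL_2(k)$, and since total oddness is vacuous in characteristic $2$, $\rho$ is of $S$-type, so Theorem~\ref{thm-GL2} will deliver the automorphy of $\rho$ and hence of $\sigma$.

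For the odd-characteristic cases my strategy is to produce a solvable totally real extension $L/K$ and a lift $\rho : G_L \to \GL_2(k')$ of $\sigma|_{G_L}$ to a suitable field $k' \supseteq k$ with $|k'| \in \{3, 5, 7, 9\}$, apply Theorem~\ref{thm-GL2} to conclude $\rho$ is automorphic, and descend via Proposition~\ref{prop_projective_solvable_descent}. The field $k'$ will be dictated by the existence of local lifts $\rho_v : G_{K_v} \to \GL_2(k')$ of $\sigma|_{G_{K_v}}$ at each real $v$: by Remark~\ref{utility}, such a lift exists iff either $-1 \in ((k')^\times)^2$ and $\pdet \circ \sigma$ is totally even (viewed in $(k')^\times/((k')^\times)^2$), or $-1 \notin ((k')^\times)^2$ and $\pdet \circ \sigma$ is totally odd. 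For $|k| \in \{5, 9\}$ with $\pdet \circ \sigma$ totally even I would take $k' = k$ (since $-1$ is a square in both $\bbF_5$ and $\bbF_9$); for $|k| = 7$ with $\pdet \circ \sigma$ totally odd, $k' = \bbF_7$; and for $|k| = 3$, $k' = \bbF_9$, viewing $\sigma$ through the inclusion $\PGL_2(\bbF_3) \hookrightarrow \PGL_2(\bbF_9)$, whereupon $-1$ becomes a square and $\pdet \circ \sigma$ becomes totally even automatically. In each case the $\rho_v$ will be chosen with $\det \rho_v(c_v) = -1$.

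I would then apply Lemma~\ref{lem_lifting_over_solvable_extension} with $S$ taken to be the set of archimedean places of $K$ augmented by enough finite places at which $\sigma$ is unramified so that the resulting $S$-split extension $L/K$ is linearly disjoint over $K$ from $K(\zeta_p)$ and from the unique quadratic extension of $K$ over which $\sigma$ becomes reducible (if $\sigma$ is dihedral; otherwise this last condition is vacuous). This produces a solvable totally real extension $L/K$ (totally real because all real places of $K$ split completely in $L$) together with a lift $\rho : G_L \to \GL_2(k')$ of $\sigma|_{G_L}$ satisfying $\rho|_{G_{L_w}} = \rho_v$ at each real $w \mid v$, so $\rho$ is totally odd. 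The disjointness conditions also force $\sigma|_{G_{L(\zeta_p)}}$ to be absolutely irreducible; hence $\rho$ is of $S$-type, Theorem~\ref{thm-GL2} gives its automorphy (whence that of $\sigma|_{G_L}$), and Proposition~\ref{prop_projective_solvable_descent} (with the non-exceptionality hypothesis coming in when $|k|=5$) descends this to the automorphy of $\sigma$.

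The principal technical subtlety I expect is the case $|k| = 3$ with $\pdet \circ \sigma$ not totally odd (so that some $\sigma(c_v)$ is a double-transposition in $\PGL_2(\bbF_3) = S_4$): no linear lift of $\sigma$ to $\GL_2(\bbF_3)$ can exist over any totally real extension, since every non-scalar involution in $\GL_2(\bbF_3)$ has determinant $-1$ and hence projects to a transposition in $S_4$ rather than to a double-transposition. Enlarging from $\bbF_3$ to $\bbF_9$, where $-1$ becomes a square, will remove this local obstruction at real places and thereby allow the uniform argument to proceed.
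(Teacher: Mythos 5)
Your overall strategy matches the paper's: for characteristic two use the isomorphism $\SL_2(k)\cong\PGL_2(k)$ and Theorem~\ref{thm-GL2}; in odd characteristic use Lemma~\ref{lem_lifting_over_solvable_extension} (with local archimedean data supplied via Remark~\ref{utility}) to produce a lift $\rho:G_L\to\GL_2(k')$ over a solvable totally real $L/K$, apply Theorem~\ref{thm-GL2}, and descend by Proposition~\ref{prop_projective_solvable_descent}.

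There is, however, a genuine gap in the reduction to the absolutely irreducible case. You claim ``the disjointness conditions also force $\sigma|_{G_{L(\zeta_p)}}$ to be absolutely irreducible,'' but this is false when $\sigma|_{G_{K(\zeta_p)}}$ is \emph{already} reducible: since $L(\zeta_p)\supseteq K(\zeta_p)$, no amount of linear disjointness of $L$ from other fields can restore irreducibility over the cyclotomic extension. This happens precisely when $\sigma$ is induced from a quadratic subextension of $K(\zeta_p)/K$. In that situation condition (1) of Proposition~\ref{prop_projective_solvable_descent} cannot be achieved, so your descent step breaks down. The paper disposes of this case at the outset: if $\sigma|_{G_{K(\zeta_p)}}$ is not absolutely irreducible, then any Tate lift of $\sigma$ is a dihedral representation, and dihedral $S$-type representations are automorphic by automorphic induction (Hecke), as in the opening step of the proof of Theorem~\ref{thm-GL2}. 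You need to carve this case off before entering the solvable-base-change argument. A second, smaller imprecision: disjointness of $L$ from $K(\zeta_p)$ together with ``the'' quadratic extension over which $\sigma$ becomes reducible is not quite the right condition; what you actually want is $\sigma(G_{L(\zeta_p)})=\sigma(G_{K(\zeta_p)})$, and the clean way to arrange this is to choose a finite set $S'$ of unramified finite places whose Frobenii generate $\Gal(\overline{K}^{\ker(\sigma|_{G_{K(\zeta_p)}})}/K)$ and take $S=S_\infty\cup S'$, as in the paper. (This also propagates non-exceptionality from $\sigma$ to $\sigma|_{G_L}$ when $p=5$.)

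Your treatment of $|k|=3$ is a genuine departure from the paper's: you embed $\PGL_2(\bbF_3)\hookrightarrow\PGL_2(\bbF_9)$, where $-1$ is a square, so the archimedean lifting obstruction vanishes and the uniform lifting argument applies with $k'=\bbF_9$. The paper instead cites the Langlands--Tunnell theorem directly (via the section $\PGL_2(\bbZ[\sqrt{-2}])\to\PGL_2(\bbF_3)$, as explained in the introduction). Your route is formally valid, but note it is not independent of Langlands--Tunnell: the $\bbF_9$ case of Theorem~\ref{thm-GL2} is reduced to $\bbF_5$, which is reduced to $\bbF_3$, which \emph{is} Langlands--Tunnell. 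So you have traded a one-line citation for a longer chain through Ellenberg's and Shepherd-Barron--Taylor's results; this buys nothing here, though the observation that one can pass to an extension of $k$ to remove the archimedean obstruction is a reasonable instinct and is exactly what the paper does in a more serious way in \S\ref{sec_mod_3}--\ref{sec_mod_5} (there one must leave totally real fields for CM fields, since no extension of $\bbF_5$ removes the obstruction when $\pdet\circ\sigma$ is totally odd).
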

\begin{proof}
When $k = \F_2$ or $\F_4$, the map $\SL_2(k) \to \PGL_2(k)$ is an isomorphism, so $\sigma$ trivially lifts to a $\GL_2(k)$ representation and we can apply Theorem~\ref{thm-GL2}. The case when $|k|=3$ follows from \cite{Tunnell}.  In the other cases, we can assume that $\sigma|_{G_{K(\zeta_p)}}$ is absolutely irreducible (as otherwise $\sigma$ lifts to a dihedral representation). 
Let $S_\infty$ be the set of infinite places of $K$ and choose a finite set $S'$ of finite places of $K$ at which $\sigma$ is unramified such that $\Gal(\overline{K}^{\ker(\sigma|_{G_{K(\zeta_p)}})}/K)$ is generated by $\{\Frob_v\}_{v \in S'}$. 
We can apply Lemma \ref{lem_lifting_over_solvable_extension}, see also Remark~\ref{utility}, with $S = S_\infty \cup S'$ to find a solvable, totally real extension $L / K$ such that $\sigma$ lifts to a representation $\rhobar : G_L \to \GL_2(k)$ such that $\rhobar|_{G_{L(\zeta_p)}}$ is absolutely irreducible and $\rhobar$ is not exceptional if $p = 5$. Then Theorem~\ref{thm-GL2} implies the automorphy of $\rhobar$ and Proposition \ref{prop_projective_solvable_descent} implies the automorphy of $\sigma$, as desired.
\end{proof}

\section{Modularity of mod $3$ representations}\label{sec_mod_3}

In this section, which is a warm-up for the next one, we give a proof of the following theorem that does not depend on the Langlands--Tunnell theorem:
\begin{theorem}\label{LT}
Let $K$ be a totally real number field, and let $\sigma : G_K \to \PGL_2(\bbF_3)$ be a representation of $S$-type such that $\Delta \circ \sigma$ is totally odd. Then $\sigma$ is automorphic.
\end{theorem}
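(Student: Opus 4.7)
The strategy is the ``2-3 switch'' announced in Remark~\ref{rmk_tunnell}: instead of invoking the Langlands--Tunnell theorem directly, I will pass to a solvable totally real extension $L/K$ over which $\sigma|_{G_L}$ lifts to a representation $\rhobar : G_L \to \GL_2(\bbF_3)$ with $\det \rhobar = \overline{\epsilon}^{-1}$, realise $\rhobar$ as the $3$-torsion of an elliptic curve $E/L$, and deduce modularity of $E$ through its (dihedral) mod-$2$ representation---whose residual automorphy is classical (theta series / Saito--Shintani) and requires no use of Langlands--Tunnell. The automorphy of $E$ will be supplied by the 2-adic automorphy lifting theorems of \cite{Allen}, and $\sigma$ itself will then be recovered by solvable descent.

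First I may reduce to the case $\sigma(G_K) \supseteq \PSL_2(\bbF_3) = A_4$, since dihedral projective representations over totally real fields are automorphic by Hecke's classical theta-series construction. Because $\Delta \circ \sigma$ is totally odd and $-1 \notin (\bbF_3^\times)^2$, Remark~\ref{utility} provides local lifts of $\sigma|_{G_{K_v}}$ to $\GL_2(\bbF_3)$ at each real place $v$. The kernel $\{\pm 1\} \subset \GL_2(\bbF_3)$ of $\Proj$ squares to the identity, so any two linear lifts of $\sigma|_{G_L}$ have the same determinant, equal to the unique character of $G_L$ reducing to $\Delta \circ \sigma|_{G_L}$; this equals $\overline{\epsilon}^{-1}|_{G_L}$ as soon as $L$ contains the totally real quadratic extension of $K$ cut out by the totally even character $(\Delta \circ \sigma) \cdot \overline{\epsilon}$. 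Combining this observation with Lemma~\ref{lem_lifting_over_solvable_extension} (applied to $S = S_\infty \cup S'$ for an auxiliary finite set $S'$ chosen large enough that the $S$-split condition keeps $L/K$ linearly disjoint from the fixed field of $\ker(\sigma|_{G_{K(\zeta_3)}})$) produces a solvable totally real $L/K$ and a lift $\rhobar : G_L \to \GL_2(\bbF_3)$ with $\Proj(\rhobar) = \sigma|_{G_L}$, $\det \rhobar = \overline{\epsilon}^{-1}$, and $\rhobar|_{G_{L(\zeta_3)}}$ absolutely irreducible.

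Next, exactly as in the $k = \bbF_5$ step of the proof of Theorem~\ref{thm-GL2}, I invoke \cite[Theorem~1.2]{SBT} to find an elliptic curve $E/L$ with $\rhobar_{E,3} \cong \rhobar$ and $\rhobar_{E,2}(G_L) = \GL_2(\bbF_2) \cong S_3$. Then $\rhobar_{E,2}$, viewed over $\bbF_4$, is induced from a nontrivial character of the quadratic extension of $L$ cut out by the sign map $S_3 \to \{\pm 1\}$; such induced representations are automorphic by classical theta-series constructions for $\GL_2$ (in the Hilbert modular setting, by Saito--Shintani), without recourse to Langlands--Tunnell. The 2-adic automorphy lifting theorem of \cite{Allen} then implies $E$ is modular, whence $\rhobar_{E,3} = \rhobar$ is automorphic, and hence so is $\sigma|_{G_L} = \Proj(\rhobar)$. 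Proposition~\ref{prop_projective_solvable_descent} now yields the automorphy of $\sigma$ itself.

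The main obstacle lies in the setup of the second paragraph: arranging the solvable totally real extension $L/K$ so that $\sigma|_{G_L}$ simultaneously admits a linear lift with cyclotomic inverse determinant, retains sufficiently large image (so that $\rhobar|_{G_{L(\zeta_3)}}$ is absolutely irreducible and the hypotheses of both \cite[Theorem~1.2]{SBT} and \cite{Allen} are met), and is still small enough for solvable descent to apply at the end. Once the lift $\rhobar$ and the elliptic curve $E$ are in hand, the 2-3 switch is routine, the essential novelty being that the residual automorphy of $\rhobar_{E,2}$ is supplied by induction from $\GL_1$ of a quadratic extension, not by Langlands--Tunnell.
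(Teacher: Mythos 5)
Your high-level strategy matches the paper's: pass to a solvable totally real extension $L/K$ over which $\sigma$ lifts to a $\GL_2(\bbF_3)$-valued representation with determinant $\overline{\epsilon}^{-1}$, realise it as the $3$-torsion of an elliptic curve $E/L$ whose mod-$2$ representation is absolutely irreducible (hence dihedral) and therefore automorphic by automorphic induction / Saito--Shintani, invoke the $2$-adic automorphy lifting theorem of \cite{Allen}, and descend via Proposition \ref{prop_projective_solvable_descent}. Your observation that any lift of $\sigma|_{G_L}$ to $\GL_2(\bbF_3)$ has the same determinant (because $\{\pm1\}$-valued twists square to the identity in $\bbF_3^\times$) is correct and consistent with the paper's use of Lemma~\ref{lem_liftings_with_prescribed_determinant}.

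However, there are two genuine gaps. First, \cite[Theorem~1.2]{SBT} realises a given mod-$5$ representation as the $5$-torsion of an elliptic curve with control over its mod-$3$ representation; it does not produce an elliptic curve with prescribed $3$-torsion. The paper instead uses \cite[Lemma 9.7]{AKT}, which is the correct reference for realising $\rhobar : G_L \to \GL_2(\bbF_3)$ with $\det\rhobar=\overline{\epsilon}^{-1}$ as $\rhobar_{E,3}$ and simultaneously controlling $\rhobar_{E,2}$. Second, and more seriously, you have not arranged any local conditions at the places of $L$ above $2$, yet these are essential for the $2$-adic theorem of \cite{Allen} to apply: one needs $E$ to have, say, multiplicative reduction at each $v\mid 2$ with the minimal discriminant having odd valuation, so that $\rhobar_{E,2}|_{G_{L_v}}$ is suitably ramified. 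The paper achieves this in two steps: it first uses Proposition~\ref{prop_projective_solvable_descent} to pass to a totally real solvable extension over which $\sigma$ is everywhere unramified, trivial at each $v\mid 2$, and $q_v\equiv 1\bmod 3$; and then \cite[Lemma 9.7]{AKT} supplies $E$ with multiplicative reduction at $v\mid 2$ and $\mathrm{ord}_v(\Delta_{\min})=3$. Without this preliminary reduction and the corresponding control over $E$ at $2$, your appeal to \cite{Allen} is not justified: knowing only that $\rhobar_{E,2}(G_L)=S_3$ does not verify the local hypotheses of that theorem.
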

\begin{proof}
We can assume that $\sigma$ is not dihedral; by the classification of finite subgroups of $\PGL_2(\bbF_3)$, we can therefore assume that $\sigma(G_K)$ contains $\PSL_2(\bbF_3)$. By Proposition \ref{prop_projective_solvable_descent}, we can moreover assume, after replacing $K$ by a solvable totally real extension, that $\sigma$ is everywhere unramified and that for each place $v | 2$ of $K$, $q_v \equiv 1 \text{ mod }3$ and $\sigma|_{G_{K_v}}$ is trivial.
\begin{lemma}
There exists a solvable totally real extension $L / K$ and a modular elliptic curve $E$ over $L$ satisfying the following conditions:
\begin{enumerate}
\item $\sigma(G_L)$ contains $\PSL_2(\bbF_3)$. In particular, $\sigma|_{G_L}$ is of $S$-type.
\item The homomorphism $\Proj(\rhobar_{E, 3})$ is $\PGL_2(\overline{\bbF}_3)$-conjugate to $\sigma|_{G_L}$.
\end{enumerate}
\end{lemma}
\begin{proof}
The character $(\Delta \circ \sigma) \omega : G_K \to \bbF_3^\times$ is totally even, so cuts out a totally real (trivial or quadratic) extension $L / K$, and $\sigma(G_{L})$ contains $\PSL_2(\bbF_3)$ and satisfies $\Delta \circ \sigma|_{G_{L}} = \omega$. Using Lemma \ref{lem_liftings_with_prescribed_determinant}, we can find a lift $\rhobar : G_L \to \GL_2(\bbF_3)$ of $\sigma|_{G_L}$ satisfying the following conditions:
\begin{itemize}
\item $\det \rhobar = \overline{\epsilon}^{-1}$.
\item For each place $v | 2$ of $L$, $\rhobar|_{G_L}$ is trivial.
\item $\rhobar(G_L)$ contains $\SL_2(\bbF_3)$. In particular, $\rhobar|_{G_{L(\zeta_3)}}$ is absolutely irreducible.
\end{itemize}
We can then apply \cite[Lemma 9.7]{AKT} to conclude that there exists an elliptic curve $E / L$ satisfying the following conditions:
\begin{itemize}
\item There is an isomorphism $\rhobar_{E, 3} \cong \rhobar$.
\item For each place $v | 2$ of $L$, $E$ has multiplicative reduction at $v$ and the valuation at $v$ of the minimal discriminant of $E$ is 3.
\item $\rhobar_{E, 2}(G_L) = \SL_2(\bbF_2)$.
\end{itemize}
Then \cite[p. 1237, Corollary]{Allen} implies that $E$ is modular, proving the lemma.
\end{proof}
We see that $\sigma|_{G_L}$ is automorphic. We can then apply Proposition \ref{prop_projective_solvable_descent} to conclude that $\sigma$ itself is automorphic, as required.
\end{proof}

\section{Modularity of mod $5$  representations}\label{sec_mod_5}

In this section we complete the proof of Theorem \ref{introthm_projective_cases} by proving Theorem \ref{thm_mod_5} below. 
\begin{theorem}\label{thm_mod_5}
Let $K$ be a totally real field, and let $\sigma : G_K \to \PGL_2(\bbF_5)$ be a representation of $S$-type which is non-exceptional, and such that $\Delta \circ \sigma$ is totally odd. Then $\sigma$ is automorphic.
\end{theorem}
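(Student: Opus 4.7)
The plan mirrors the proof of Theorem \ref{LT} for $k = \bbF_3$, but must contend with an archimedean obstruction: since $-1 \in (\bbF_5^\times)^2$ but $\Delta \circ \sigma(c_v) \neq 1$ at every real place $v$ of $K$, Remark \ref{utility} shows that $\sigma|_{G_{K_v}}$ admits no linear lift to $\GL_2(\bbF_5)$, and this defect cannot be killed by any totally real solvable base change. The strategy is therefore to pass to a CM extension $L/K$, where the real places become complex and the obstruction disappears; to lift $\sigma|_{G_L}$ to a linear representation with cyclotomic determinant; to realize this lift as the mod-$5$ Galois representation of an elliptic curve $E/L$; to prove the modularity of $E$ via its $2$-adic representation using the CM-field automorphy results of \cite{AKT}; and finally to descend from $L$ down to $K$.

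As in Theorem \ref{LT}, I would first reduce to the case that $\sigma(G_K)$ contains $\PSL_2(\bbF_5)$, the image being dihedral otherwise. Set $K_1 = K(\sqrt{-d})$ for a totally positive $d \in K^\times$, which is CM and in which every archimedean place is complex. Applying Lemma \ref{lem_lifting_over_solvable_extension} over $K_1$ (using the addendum that allows the output to be CM when the base is CM) together with Lemma \ref{lem_liftings_with_prescribed_determinant}, I would produce a solvable CM extension $L/K$ and a linear lift $\widetilde\rhobar : G_L \to \GL_2(\bbF_5)$ of $\sigma|_{G_L}$ with $\det\widetilde\rhobar = \overline{\epsilon}^{-1}$. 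By enlarging the $S$-split set appropriately, I would further arrange $\widetilde\rhobar(G_L) \supseteq \SL_2(\bbF_5)$ (so $\widetilde\rhobar|_{G_{L(\zeta_5)}}$ is absolutely irreducible) and controlled local behavior of $\widetilde\rhobar$ at the primes $w$ above $2$ (for instance, $\widetilde\rhobar|_{G_{L_w}}$ trivial and $q_w \equiv 1 \bmod 5$).

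Since the twisted modular curve parametrizing elliptic curves with mod-$5$ representation isomorphic to $\widetilde\rhobar$ has genus zero and an $L$-rational point (as $\det\widetilde\rhobar = \overline{\epsilon}^{-1}$), an analogue over $L$ of \cite[Lemma 9.7]{AKT} (closely modelled on \cite[Theorem 1.2]{SBT}) then produces an elliptic curve $E/L$ with $\rhobar_{E,5} \cong \widetilde\rhobar$, with $\rhobar_{E,2}(G_L) = S_3 = \GL_2(\bbF_2)$, and with multiplicative reduction of controlled conductor at the selected primes above $2$. Since $S_3$ is the normalizer of a non-split torus in $\GL_2(\bbF_2)$, the representation $\rhobar_{E,2}$ is induced from a character on a quadratic subextension, and its automorphy follows from automorphic induction. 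The $2$-adic automorphy lifting theorem \cite[Theorem 7.1]{AKT} for residually dihedral representations over CM fields then yields the modularity of $E$; in particular, $\sigma|_{G_L} = \Proj(\widetilde\rhobar) = \Proj(\rhobar_{E,5})$ is automorphic.

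Finally, to descend back to $K$, I would use Lemma \ref{lem_tate} to lift $\sigma$ to some $\rho : G_K \to \GL_2(\overline{\bbF}_5)$ and then Theorem \ref{thm:geo-lift} to obtain a characteristic-zero lift $\rho : G_K \to \GL_2(\overline{\bbZ}_5)$. Residual automorphy of $\rho|_{G_L}$ follows from that of $\widetilde\rhobar$ (they differ by a character twist), and an automorphy lifting theorem over CM fields from \cite{AKT} then yields the automorphy of $\rho|_{G_L}$ in characteristic zero. Solvable descent through the tower $L/K$ then delivers automorphy of $\rho$, whence of $\sigma$. The main obstacle is the CM-to-totally-real step of this descent, which is not covered by Proposition \ref{realSolvableDescent}; here one must instead invoke cyclic base change for $\GL_2$, verifying that the automorphic representation over $L$ is $\Gal(L/L^+)$-invariant (automatic because it arises from the restriction of a Galois representation defined on a larger group) and that the descended automorphic representation retains its regular algebraic weight-$0$ character. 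It is precisely for this step that the cyclotomic-determinant normalization of $\widetilde\rhobar$ and the careful choice of $L$ are essential.
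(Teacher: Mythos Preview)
Your overall strategy is correct and closely parallels the paper's, but two points deserve comment.

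First, the paper makes a sharper choice of CM extension. Rather than passing to an arbitrary $K(\sqrt{-d})$ and then to a further solvable CM tower $L$ via Lemma \ref{lem_lifting_over_solvable_extension}, the paper first uses totally real solvable base change (via Proposition \ref{prop_projective_solvable_descent}) to arrange that $\sigma$ is everywhere unramified and that a characteristic-zero lift $\rho$ exists which is ordinary at places above $5$; it then takes $K'/K$ to be the \emph{quadratic} CM extension cut out by the character $(\Delta\circ\sigma)\omega$. This specific choice buys two things simultaneously: after restriction one has $\Delta\circ\sigma|_{G_{K'}}=\omega$, so Lemma \ref{lem_liftings_with_prescribed_determinant} applies directly over $K'$ (no further extension required) to produce the lift $\tau$ with $\det\tau=\overline{\epsilon}^{-1}$; and since $[K':K]=2$, the final descent is a single application of cyclic base change \cite{L}, sidestepping the complications you flag in your last paragraph about descending through a CM tower. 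The paper also arranges ordinariness at $v\mid 5$ (and multiplicative reduction of $E$ there) rather than local control at $v\mid 2$, so as to match the hypotheses of the ordinary automorphy lifting theorem \cite[Theorem 8.1]{AKT}.

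Second, and more substantively, your proposal omits the \emph{decomposed generic} hypothesis of \cite[Definition 4.3.1]{10authors}, which is a standing assumption for the CM automorphy lifting theorems of \cite{AKT} that you invoke (both for the modularity of $E$ via \cite[Corollary 9.13]{AKT} and for the lifting theorem \cite[Theorem 8.1]{AKT}). The paper devotes a separate lemma to verifying that $\rhobar|_{G_{K'}}$ is decomposed generic, by showing that $\sigma(G_{\widetilde M})\supseteq\PSL_2(\bbF_5)$ where $\widetilde M$ is the Galois closure of $K'(\zeta_5)/\bbQ$; this uses in an essential way that $\Delta\circ\sigma$ is totally odd and that the CM extension is controlled. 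Without this verification the appeal to \cite{AKT} is incomplete, and over your more general tower $L$ the check would require additional care.
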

\begin{proof}
By the classification of subgroups of $\PGL_2(\bbF_5)$, automorphic induction, and the Langlands--Tunnell theorem, we can assume that $\sigma(G_K)$ contains $\PSL_2(\bbF_5)$. By Theorem \ref{thm:geo-lift}, Lemma \ref{lem_tate}, and Proposition \ref{prop_projective_solvable_descent} we can assume, after possibly replacing $K$ be a solvable totally real extension, that the following conditions are satisfied:
\begin{itemize}
\item There exists a representation $\rhobar : G_K \to \GL_2(\overline{\bbF}_5)$ such that $\Proj(\rhobar)$ is $\PGL_2(\overline{\bbF}_5)$-conjugate to $\sigma$. Moreover, $\rhobar$ is everywhere unramified.
\item There exists a representation $\rho : G_K \to \GL_2(\overline{\bbQ}_5)$ lifting $\rhobar$, which is unramified almost everywhere.
\item For each place $v | 5$ of $K$, $\zeta_5 \in K_v$, $\rhobar|_{G_{K_v}}$ is trivial, and $\rho|_{G_{K_v}}$ is ordinary, in the sense of \cite[\S 5.1]{Tho16}.
\item Let $\chi = \det \rho$. Then $\chi \epsilon$ has finite order prime to $5$ and for each finite place $v$ of $K$, $\chi \epsilon|_{G_{K_v}}$ is unramified. In particular, $\overline{\chi}$ is everywhere unramified.
\end{itemize}
Let $K' / K$ denote the quadratic CM extension cut out by the character $(\Delta \circ \sigma) \omega$.
\begin{lemma}
The representation $\rhobar|_{G_{K'}}$ is decomposed generic in the sense of \cite[Definition 4.3.1]{10authors}.
\end{lemma}
\begin{proof}
It is enough to find a prime number $l$ such that $l$ splits in $K'$ and for each place $v | l$ of $K'$, $q_v \equiv 1 \text{ mod }5$ and the eigenvalues of $\rhobar(\Frob_v)$ are distinct.  The argument of \cite[Lemma 7.1.5, (3)]{10authors} will imply  the existence of such a prime $l$ if we can show that if $M = K'(\zeta_5)$ and $\widetilde{M} / \bbQ$ is the Galois closure of $M/\bbQ$, then $\sigma(G_{\widetilde{M}})$ contains $\PSL_2(\bbF_5)$. To see this, first let $\widetilde{K} / \bbQ$ be the Galois closure of $K / \bbQ$. Then $\widetilde{K}$ is totally real, and so $\sigma(G_{\widetilde{K}}) = \sigma(G_K) = \PGL_2(\bbF_5)$ because $\Delta \circ \sigma$ is totally odd. The extension $M \widetilde{K} / \widetilde{K}$ is abelian, so $\widetilde{M} / \widetilde{K}$ is abelian and $\sigma(G_{\widetilde{M}})$ must contain $\PSL_2(\bbF_5)$.
\end{proof}
By construction, $\Delta \circ \sigma|_{G_{K'}} = \overline{\epsilon}^{-1} \text{ mod }(\bbF_5^\times)^2$, so by Lemma \ref{lem_liftings_with_prescribed_determinant}, $\sigma|_{G_{K'}}$ lifts to a continuous homomorphism $\tau : G_{K'} \to \GL_2(\F_5)$ such that $\det \tau = \overline{\epsilon}^{-1}$. In particular, there is a character $\overline{\psi} : G_{K'} \to \overline{\F}_5^\times$ such that $\tau = \rhobar|_{G_{K'}} \otimes \overline{\psi}$. Let $\psi$ denote the Teichm\"uller lift of $\overline{\psi}$; then the determinant of $\rho|_{G_{K'}} \otimes \psi$ equals $\epsilon^{-1}$.
\begin{lemma}
The representation $\tau$ satisfies the following conditions:
\begin{enumerate}
\item $\tau|_{G_{K'(\zeta_5)}}$ is absolutely irreducible and $\tau$ is non-exceptional.
\item $\tau$ is decomposed generic. 
\end{enumerate}
\end{lemma}
\begin{proof}
The representation $\tau|_{G_{K'(\zeta_5)}}$ is absolutely irreducible because its projective image contains $\PSL_2(\bbF_5)$. If $\zeta_5 \in K'$ then $\sqrt{5} \in K$ and so $K' = K(\Delta \circ \sigma) = K(\zeta_5)$; this possibility is ruled out because $\sigma$ is non-exceptional. It follows that $\tau$ is non-exceptional. The representation $\tau$ is decomposed generic  because $\rhobar|_{G_{K'}}$ is (and this condition only depends on the associated projective representation).
\end{proof}
Thanks to the lemma, we can apply \cite[Lemma 9.7]{AKT} and \cite[Corollary 9.13]{AKT} to conclude the existence of a modular elliptic curve $E$ over $K'$ such that $\rhobar_{E, 5} \cong \tau$ and for each place $v | 5$ of $K'$, $E$ has multiplicative reduction at the place $v$. We can then apply the automorphy lifting theorem \cite[Theorem 8.1]{AKT} to conclude that $\rho|_{G_{K'}} \otimes \psi$ is automorphic, hence that $\rho|_{G_{K'}}$ is automorphic. It follows by cyclic descent \cite{L} that $\rho$ and hence $\sigma$ are also automorphic, and this completes the proof.
\end{proof}

\bibliographystyle{alpha}
\bibliography{ModLiftBib}

\end{document}